\documentclass{amsart}
\usepackage{graphicx,diagrams}
\usepackage{amsmath,amscd,amssymb,enumitem}

\usepackage{tikz}

\vfuzz2pt 
\hfuzz2pt 
\newtheorem{thm}{Theorem}[section]
\newtheorem{cor}[thm]{Corollary}
\newtheorem{lem}[thm]{Lemma}
\newtheorem{prop}[thm]{Proposition}
\newtheorem*{main thm}{Main Theorem}
\newtheorem*{main lem}{Main Lemma}
\theoremstyle{definition}
\newtheorem{defn}[thm]{Definition}
\newtheorem{notn}[thm]{Notation}
\newtheorem{example}[thm]{Example}

\theoremstyle{remark}
\newtheorem{rem}[thm]{Remark}
\numberwithin{equation}{section}


\newcommand{\R}{\mathbb R}
\newcommand{\C}{\mathbb C} 
\newcommand{\Z}{\mathbb Z}

\newcommand{\Tc}{\mathbb T}

\newcommand{\tens}{\otimes} 
\newcommand{\dsum}{\oplus} 
\newcommand{\x}{\times}
 
\newcommand{\iso}{\simeq}

\renewcommand{\d}{\textrm{d}}
\renewcommand{\phi}{\varphi} 
\renewcommand{\to}{\longrightarrow}

\renewcommand{\mapsto}{\longmapsto}

\newcommand{\sr}{\mathcal}

\newcommand{\Aut}{\textrm{Aut}}
\newcommand{\Ann}{\textrm{Ann}} 
 
\newcommand{\del}{\partial}

\renewcommand{\^}{\wedge} 
 
\renewcommand{\Bar}{\overline}
\renewcommand{\epsilon}{\varepsilon}
\newcommand{\hide}[1]{}

\newcommand{\Id}{\textrm{Id}}

\renewcommand{\Im}{\textrm{Im}}

\begin{document}

\begin{abstract}
We answer the natural question: when is a transversely holomorphic symplectic foliation induced by a generalized complex structure?  The leafwise symplectic form and transverse complex structure determine an obstruction class in a certain cohomology, which vanishes if and only if our question has an affirmative answer.  We first study a component of this obstruction, which gives the condition that the leafwise cohomology class of the symplectic form must be transversely pluriharmonic.  As a consequence, under certain topological hypotheses, we infer that we actually have a symplectic fibre bundle over a complex base.  We then show how to compute the full obstruction via a spectral sequence.  We give various concrete necessary and sufficient conditions for the vanishing of the obstruction.  Throughout, we give examples to test the sharpness of these conditions, including a symplectic fibre bundle over a complex base which does not come from a generalized complex structure, and a regular generalized complex structure which is very unlike a symplectic fibre bundle, i.e., for which nearby leaves are not symplectomorphic.
\end{abstract}

\title{Symplectic foliations and generalized complex structures}
\author{Michael Bailey}
\email{bailey@cirget.ca}
\subjclass{53D18} 
\thanks{This work was done while the author was at the University of Toronto, and then at CIRGET/UQAM}
\maketitle

Generalized complex geometry (see, eg., \cite{Gualtieri2011} or \cite{Hitchin2003}) includes both symplectic and complex geometry as special cases.  In fact, near a \emph{regular point}, a generalized complex manifold ``looks like'' a product of a symplectic and complex manifold.  To be precise, a generalized complex structure induces a symplectic foliation (i.e., a Poisson structure) and a transverse complex structure; about a regular point, up to isomorphism, there is no more local information than this (and incidentally this is also true in the non-regular case, for subtle reasons).  Then is a regular generalized complex structure just the same as a transversely holomorphic symplectic structure?

In other words, given a regular Poisson structure $P$ and transverse complex structure $I$, it is natural to ask if $(P,I)$ are induced by a generalized complex structure.  This is the question we address in this paper.  The answer is always yes locally, so any obstruction must be global.  In fact, sometimes $(P,I)$ are \emph{not} generalized complex, as we shall see.  The obstruction places certain strong constraints on the relationship between the Poisson and transverse complex structures.

We use the method of coupling forms, whereby the leafwise symplectic form is extended to a 2-form on the whole manifold.  The calculation of the obstruction will then depend on this coupling form, with the complication that we must understand its interaction with the transverse complex structure.  For a good general reference for symplectic fibrations and coupling forms, see \cite{GuilleminLermanSternberg} or \cite[Chap. 6]{McDuffSalamon}.

\begin{samepage}
\subsection*{Summary}

\subsubsection*{Section \ref{intro section}}
We review the definitions and basic facts of generalized complex structures from the pure spinor viewpoint.
\end{samepage}

\subsubsection*{Section \ref{problem statement}}
We state our problem precisely, give the basic construction we will continue to use throughout the paper, and give some simple sufficient conditions for an affirmative answer to our question.  The construction always gives an \emph{almost} generalized complex structure; thus our concern is for its integrability.

\subsubsection*{Section \ref{integrability conditions}}
We study our construction in more detail, and give a necessary and sufficient condition for the existence of a compatible generalized complex structure (Theorem \ref{exact in truncated} and Proposition \ref{H iff alpha and beta}).  The condition takes the form of the vanishing of an obstruction class in a certain cohomology.  At this point, the condition is given relatively abstractly.

\subsubsection*{Section \ref{the pluriharmonic condition}}
We study one component of the obstruction, whose vanishing is both necessary and sufficient in certain low-dimensional cases. We consider \emph{smooth symplectic families}, i.e., symplectic foliations that come from a fibre bundle but which may not have local symplectic trivializations.  The condition is that the fibrewise symplectic form should be \emph{pluriharmonic} in the fibrewise cohomology bundle over the base (see Theorem \ref{[omega] pluriharmonic}, and Section \ref{the GM connection} for definitions).

For example, we consider a compact, connected smooth symplectic family with 2-dimensional fibres---or, in higher-rank cases, if certain topological conditions are satisfied---and conclude that if the data are generalized complex, it is in fact a symplectic fibre bundle.

(Counter)examples in this section include: admissible data which do \emph{not} come from a generalized complex structure; smooth symplectic families over a complex base which \emph{do} come from generalized complex structures, but which do \emph{not} have local symplectic trivializations (i.e., nearby leaves are not alike).

\subsubsection*{Section \ref{the full obstruction}}
We study the entire obstruction, and describe how to compute it, one component at a time, using a spectral sequence (see Theorem \ref{cohomology condition} for the statement).  For a symplectic fibre bundle over a complex base, two out of three components of the obstruction vanish, and the third involves a finite-dimensional calculation.  If the base is K\"ahler, the calculation simplifies somewhat.  However, even for a symplectic fibre bundle over a K\"ahler manifold, the third component of the obstruction will not always vanish, as in Example \ref{bad symplectic bundle}.

\subsection*{Acknowledgements}
This paper expands upon Chapter 3 in my Ph.D. thesis \cite{Bailey}.  During my time in graduate school at the University of Toronto, I received support, both material and mathematical, from many sources.  I would like to thank Yael Karshon, Brent Pym, Jordan Watts and Ida Bulat.  Particular thanks goes to Marco Gualtieri, with whom I had many useful discussions clarifying the ideas in this paper.

\section{Pure spinors and generalized complex structures}\label{intro section}

First we will recall some definitions and facts about transversely holomorphic foliations, then we will briefly review the pure spinor formalism of generalized complex structures.  This is not the usual way these structures are introduced---a generalized complex structure $J$ on a manifold $M$ is usually defined as a complex structure on a \emph{Courant algebroid} over $M$---but the data in either formalism determine each other, and in this paper we stick to only one for the sake of brevity.  For a thorough introduction, and proofs of claims in this section, see \cite{Gualtieri2011} (from which we have taken most of the material on generalized complex structures).

The important points to note are as follow: an (almost) generalized complex structure may be represented by its \emph{canonical line bundle} (Definition \ref{almost gc}) of pure spinors, which then has a pointwise decomposition into complex and symplectic parts, as in Proposition \ref{pointwise form of J}; the integrability condition, at least in the regular case which we are considering, amounts to the existence of closed (in a twisted sense) local generating sections for the canonical line bundle; the symmetries of generalized geometry extend the diffeomorphism group to include the $B$-transforms (Definition \ref{define B}); finally, for a regular, integrable generalized complex structure, the pointwise decomposition into complex and symplectic parts extends to a local normal form (Theorem \ref{regular local normal form}).

\begin{notn}
We indicate the complexification of a real vector bundle by a subscript $\C$, eg., $V_\C = \C \tens V \to M$.

In this paper we only consider smooth sections of bundles, which we denote $\Gamma(\cdot)$.  We let $i = \sqrt{-1}$ (except in those cases where $i$ denotes a degree).
\end{notn}

\subsection{Transversely holomorphic foliations}

In this section, we take our definitions from \cite{HaefligerSundararaman}.  We have the twin viewpoints of foliations either as Haefliger structures (from which it is straightforward to give meaning to ``transversely holomorphic'') or as integrable distributions (which is suitable for the decomposition of forms).

\begin{defn}A transversely holomorphic foliation of real codimension $2k$ on a manifold $M$ is given by an atlas of submersions $f_i : U_i \to \C^k$, for an open cover $\{U_i,\ldots\}$ of $M$, such that there exist holomorphic transition functions $\phi_{ij}$ between domains in $\C^k$, with $f_j = \phi_{ij} f_i$ on the intersections $U_i \cap U_j$.  Two transversely holomorphic foliations are equivalent if their atlases share a common refinement.
\end{defn}

This is equivalent to a real foliation $\sr{S}$ (whose leaves are preimages of points via $f_i$) and, on its normal bundle, a complex structure $I:N\sr{S} \to N\sr{S}$.  $I$ is a ``transverse structure'' in the sense that it is flat for the Bott connection of $\sr{S}$, and it is integrable in the sense that its Nijenhuis tensor vanishes.  These conditions may be combined with the integrability of $T\sr{S}$ into a single condition: 

\begin{prop}
A transversely holomorphic foliation is equivalent to a distribution $S \subset TM$ along with a complex structure $I$ on $NS := TM/S$, such that
\begin{equation}
S + N_{1,0}S
\end{equation}
is Lie-involutive.  ($S$ and $N_{1,0}S$ may be added by choosing any representatives for $N_{1,0}S$ in $T_\C M$, and $S + N_{1,0}S \subset T_\C M$ will not depend on this choice.)
\end{prop}
In other words, the integrability of $S + N_{1,0}S$ entails that $S$ is an integrable real distribution, and that $I$ is a transverse structure with vanishing Nijenhuis tensor.

\begin{rem}\label{yet another}
Alternatively, since $N^*S$ sits naturally in $T_\C M$ as $\Ann(S)$, yet another equivalent expression for the data of a transversely holomorphic foliation is just the \emph{canonical line bundle} of $I$,
\begin{equation}
\kappa_I := \^ ^k N^*_{1,0} S,
\end{equation}
which will have closed local generating sections.
\end{rem}

\subsection{Almost generalized complex structures}

\begin{defn}\label{define spinor}
By a \emph{spinor} on a manifold $M$ we will mean a (complex) mixed-degree differential form $\rho \in \Gamma(\^ ^\bullet T^*_\C M)$.
\end{defn}

\begin{defn}
Sections of $T_\C M \dsum T_\C^*M$ act on the spinors via the \emph{Clifford action}, by contraction and wedging: if $(X,\xi) \in \Gamma(T_\C M \dsum T_\C^*M)$ and $\rho$ is a spinor, then
$$(X,\xi)\cdot\rho = \iota_X \rho + \xi\^\rho.$$
Therefore, every spinor $\rho$ on $M$ has a \emph{null subbundle} $L_\rho \subset T_\C M \dsum T_\C^*M$ which is just its annihilator under the Clifford action.
\end{defn}

\begin{defn}
A spinor $\rho$ is \emph{pure} if $L_\rho$ is a \emph{maximal} isotropic subbundle with respect to the standard symmetric pairing on $T_\C M \dsum T_\C^*M$. 
\end{defn}
Such a maximal isotropic subbundle will have half the rank of $T_\C M \dsum T_\C^*M$; that is, its rank will be the dimension of $M$.

\begin{defn}
A (complex) maximal isotropic $L \subset T_\C M \dsum T^*_\C M$ has \emph{real rank zero} if $L \cap \bar{L} = 0$.  We will also say that a (complex) pure spinor $\rho$ has real rank zero if $L_\rho$ does.  (There is an alternative definition of real-rank-zero for spinors in terms of the Mukai pairing, which we omit.)
\end{defn}

An almost complex structure $I$ may be given by its canonical line bundle $\kappa_I$, the top wedge power of the $(1,0)$-forms.  Analogously,
\begin{defn}\label{almost gc}
An \emph{almost generalized complex structure} $J$ on $M$ is given by a pure spinor line bundle $\kappa_J \subset \^ ^\bullet T^*_\C M$ of real rank zero, called the \emph{canonical line bundle} of $J$.  (This includes almost complex structures, where $\kappa_J = \kappa_I$.)
\end{defn}

To each point $x \in M$ we associate a nonnegative integer, the \emph{type} of $J$ at $x$, which we may define as the lowest nontrivial degree of its canonical line bundle at $x$.

For example, in the case where an almost generalized complex structure is given by an almost complex structure, this is just the degree of $\kappa_I$, i.e., the complex dimension.  In general, we understand the type of $J$ as the number of complex (as opposed to symplectic) dimensions, as will be made clear (see Proposition \ref{pointwise form of J}).

\begin{defn}
$J$ is \emph{regular} at $x$ if its type is locally constant at $x$.
\end{defn}

\begin{rem}\label{formalism correspondence}
The alternative definition of generalized complex structures---as anti-involutions, $J:T M \dsum T^* M \to T M \dsum T^* M$, on the standard Courant algebroid---is related to this pure spinor formalism just by equating the $+i$-eigenbundle of $J$ with the null subbundle, $L_{\kappa_J}$, of the pure spinor bundle.
\end{rem}

%

\begin{defn}\label{define exponential}
We define the \emph{Clifford exponential} via the usual Taylor series. In particular, if $B$ is a form, then
$$e^B = 1 + B + \frac{1}{2} B \^ B + \ldots,$$
where in this case the products eventually vanish and the series is finite.  Note that, for forms of even degree, this exponential is a homomorphism from $+$ to $\^$.
\end{defn}

\subsubsection*{Pointwise decomposition}
At a point, i.e., on a single tangent space, there is a characterization of (almost) generalized complex structures in terms of a decomposition into complementary ``symplectic leaf'' and ``transverse complex'' parts, possibly modified by a $B$-transform (see Definition \ref{define B}), as follows:
\begin{prop}\label{pointwise form of J}
If $J|_x$ is an (almost) generalized complex structure \emph{at a point} $x$, then there exist
\begin{itemize}
\item a subspace $S_x \subset T_x M$,
\item an (almost) complex structure $I$ on the normal space $N_x S$,
\item and real 2-forms $\omega$ and $B$ in $\^ ^2 T^*_x M$---where $\omega$ is nondegenerate when pulled back to $S_x$
\end{itemize}
such that the canonical line bundle at $x$ is of the form
\begin{equation}
\left.\kappa_J\right|_x = e^{B+i\omega} \^ \kappa_I
\end{equation}
\end{prop}

If $J$ is \emph{regular} at $x$ (as will always be the case in this paper), then such a representation extends to a neighbourhood.  Furthermore, any spinor line bundle of this form determines an almost generalized complex structure.

\begin{rem}\label{S and I well-defined}
The lowest-degree component of $\kappa_J$ is just $\kappa_I$; hence, the distribution $S = \Ann(\kappa_I)$ and the almost complex structure $I$ on $NS$ are uniquely determined by $J$.  However, $B + i\omega$ is not.  Rather, $B+i\omega$ is well-defined up to $N^*_{1,0} \^ T^*_\C M$; that is, $B+i\omega$ is a well-defined section of $\^ ^2 \left(S_\C + N_{0,1}S\right)^*$.
\end{rem}

In generalized geometry, the symmetry group of a manifold is understood to be an extension of the diffeomorphisms.  In addition, it includes the $B$-transforms:
\begin{defn}\label{define B}
If $B$ is a real 2-form and $J$ is an almost generalized complex structure with canonical line bundle $\kappa_J$, then the \emph{$B$-transform} of $J$ is written $e^B\cdot J$, and may be defined in terms of its action on $\kappa_J$:
$$\kappa_{e^B\cdot J} = e^B \^ \kappa_J.$$
\end{defn}
We distinguish between closed $B$-transforms and non-closed $B$-transforms, since when the 2-form $B$ is non-closed, the integrability condition changes (see Proposition \ref{H+dB}).

\subsection{Integrability of generalized complex structures}

\begin{defn}\label{define integrability}
If $H$ is a closed real 3-form and $\rho$ is a spinor, then
$$d_H \rho := d\rho + H\^\rho.$$
We say that a pure spinor $\rho$ is \emph{$H$-integrable} if
$$d_H \rho = (X,\xi)\cdot\rho$$
for some $(X,\xi) \in \Gamma(T_\C M \dsum T^*_\C M)$.

We say that an almost generalized complex structure $J$ is $H$-integrable, or, alternatively, that $J$ is a generalized complex structure with curvature $H$, if every local section of the canonical line bundle $\kappa_J$ of $J$ is $H$-integrable.
\end{defn}
\begin{rem}
In fact, if about every point in $M$ there is \emph{some} integrable local generating section of $\kappa_J$, then this is sufficient: then \emph{every} local section of $\kappa_J$ will be integrable and thus $J$ will be integrable.

Furthermore, if $J$ is $H$-integrable and is \emph{regular} at $x$, then in fact there is a $d_H$-closed local generating section of $\kappa_J$ about $x$.
\end{rem}

\begin{prop}\label{H+dB}
If $J$ is an $H$-integrable generalized complex structure for some closed 3-form $H$, and $B$ is a 2-form, then $e^B\cdot J$ is an $(H + dB)$-integrable generalized complex structure. 
\end{prop}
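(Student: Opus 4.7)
The plan is to work directly with a local generating section $\rho$ of $\kappa_J$ and show that $e^B\^\rho$, which generates $\kappa_{e^B\cdot J}$, satisfies the twisted integrability condition with the 3-form $H+dB$ in place of $H$. By the remark following Definition \ref{define integrability}, it suffices to produce, about each point, \emph{one} integrable generating section.

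The computation rests on two small lemmas about $e^B$, each proved by summing the series term-by-term and using that $B$ is a $2$-form. First, $d(e^B)=dB\^ e^B$, because $d(B^{k})=k\,dB\^ B^{k-1}$ (the Leibniz signs cancel since $B$ is of even degree, and $B$ commutes with $dB$ under $\^$). Second, $\iota_X(e^B)=\iota_X B\^ e^B$, by the analogous computation for the interior product on powers of $B$. A consequence I would record separately is that, for any section $(X,\xi)$ of $T_\C M\dsum T^*_\C M$,
\begin{equation*}
e^B\^\bigl((X,\xi)\cdot\rho\bigr)\;=\;(X,\,\xi-\iota_X B)\cdot(e^B\^\rho),
\end{equation*}
since wedging $\xi$ past $e^B$ is free (odd meets even), while absorbing $e^B$ past $\iota_X$ costs precisely the correction $\iota_X B\^ e^B\^\rho$.

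With these in hand, the main step is to expand
\begin{equation*}
d_{H+dB}(e^B\^\rho)=d(e^B)\^\rho+e^B\^ d\rho+(H+dB)\^ e^B\^\rho,
\end{equation*}
noting that each component of $e^B$ is even, so $e^B$ commutes past the 3-form $H+dB$ under $\^$. Collecting the terms one sees the $dB\^ e^B\^\rho$ contributions combine with the original $H\^\rho$ to leave $e^B\^ d_H\rho$ (possibly with a factor depending on the sign convention for the $B$-transform, which is the point one has to be careful about). By hypothesis $d_H\rho=(X,\xi)\cdot\rho$, and the Clifford identity above then yields
\begin{equation*}
d_{H+dB}(e^B\^\rho)=(X,\,\xi-\iota_X B)\cdot(e^B\^\rho),
\end{equation*}
which is exactly $(H+dB)$-integrability of the generating section $e^B\^\rho$.

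The only real obstacle is bookkeeping: getting the two combinatorial identities $d(e^B)=dB\^ e^B$ and $\iota_X(e^B)=\iota_X B\^ e^B$ right, and tracking the interaction between $d(e^B\^\rho)$ and the twisting term $dB\^ e^B\^\rho$ so that they combine correctly. Once those are pinned down, the proposition is a one-line manipulation.
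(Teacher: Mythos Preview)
The paper does not actually supply a proof of this proposition; it is stated without argument as one of the basic facts imported from \cite{Gualtieri2011}. Your approach---computing $d_{H'}(e^B\^\rho)$ via $d(e^B)=dB\^ e^B$, comparing with $e^B\^ d_H\rho$, and transporting the Clifford action through $e^B$ by the identity $e^B\^\bigl((X,\xi)\cdot\rho\bigr)=(X,\xi-\iota_X B)\cdot(e^B\^\rho)$---is the standard one and is correct.

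One caution worth making precise rather than hedging: with the conventions exactly as written in the paper ($d_H\rho=d\rho+H\^\rho$ and $\kappa_{e^B\cdot J}=e^B\^\kappa_J$), the two $dB$ terms in your expansion \emph{add} rather than cancel. Indeed
\[
d_{H'}(e^B\^\rho)\;=\;dB\^ e^B\^\rho \;+\; e^B\^ d\rho \;+\; H'\^ e^B\^\rho,
\]
so matching this with $e^B\^ d_H\rho = e^B\^ d\rho + H\^ e^B\^\rho$ forces $H'=H-dB$, not $H+dB$. Your instinct that the sign is the delicate point is right; which sign appears depends on whether the $B$-transform is defined as wedging by $e^B$ or by $e^{-B}$, and the literature is not uniform. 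The paper uses the proposition only in the form ``$B$-transforms preserve integrability up to an exact shift of $H$,'' for which either sign serves.
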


\begin{thm}[Gualtieri, \cite{Gualtieri2011}]\label{regular local normal form}
If $J$ is a generalized complex structure regular at $x$, then there is a neighbourhood of $x$ which is isomorphic---via diffeomorphism and $B$-transform---to a neighbourhood in $\R^{n-2k} \times \C^k$, with generalized complex structure (integrable for $H=0$) given by
$$\kappa = e^{i\omega} \^ \kappa_I,$$
where $\omega$ is the standard symplectic form on $\R^{n-2k}$ and $\kappa_I$ is the canonical bundle for the complex structure on $\C^k$.
\end{thm}
In other words, near a regular point, any generalized complex structure is equivalent to the product of a complex structure with a symplectic structure.  (In the vicinity of type change the situation is different, and has been studied in \cite{AbouzaidBoyarchenko} and \cite{Bailey}.)

The following is an easy corollary of Theorem \ref{regular local normal form}:

\begin{prop}\label{J induces P and I}
Let $J$ be a (regular) generalized complex structure integrable with respect to some closed 3-form, and let
$$\kappa_J = e^{B+i\omega} \^ \kappa_I$$
be the local form decomposition of its canonical line bundle.  Then $\kappa_I$ determines a transversely holomorphic foliation, as in Remark \ref{yet another}, and $\omega$ pulls back to a leafwise symplectic form.
\end{prop}

Thus we say that $J$ determines a transversely holomorphic symplectic foliation.  The real symplectic foliation is equivalent to a Poisson structure, $P$, and the transversely holomorphic structure is just a complex structure $I$ on the normal bundle to $P$'s symplectic leaves, thus we will often indicate the transversely holomorphic symplectic foliation with the data $(P,I)$.

\section{Problem statement and non-integrable solution}\label{problem statement}

The precise meaning of our central question is now clear:
\begin{itemize}
\item When does a transversely holomorphic symplectic foliation come from a generalized complex structure on $M$, as in Proposition \ref{J induces P and I}?
\end{itemize}
This is always the case locally, as an easy consequence of Corollary \ref{complementary foliation}, so any obstruction must be global.  The global answer is ``not always.''

However, we can always find \emph{almost} generalized complex structures inducing a given transversely holomorphic symplectic foliation.  We give a construction, which will then be the basis of our general solution of the central question.

\begin{notn}
In what follows, let $P$ be a regular Poisson structure and $I$ a complex structure transverse to its leaves.  (As we remarked, the data $(P,I)$ are equivalent to a transversely holomorphic symplectic foliation.)  Let $\sr{S}$ be the symplectic foliation of $P$, $S = T\sr{S}$ be its tangent distribution, and let $\omega \in \Gamma(\^ ^2 S^*)$ be the induced leafwise symplectic form.  ($\omega$ is leafwise-closed, and leafwise nondegenerate.)
\end{notn}

\begin{rem}
We say that the data $(P,I)$ ``are'' generalized complex as shorthand to mean they are induced by a generalized complex structure.
\end{rem}

\begin{notn}
We will use $NS := TM/S$ to indicate the normal bundle to $S$, whereas $N \subset TM$ will refer to a particular choice of distribution complementary to $S$, i.e., representatives of $NS$.  Then the transverse complex structure $I$ induces an almost complex structure on $N$, splitting it as $N_\C = N_{1,0} \dsum N_{0,1}$.
\end{notn}

\begin{defn}\label{definition from N}
If $N \subset TM$ is a smooth distribution complementary to $S$, then we may extend $\omega$ to $M$ by specifying $\ker(\omega) = N$.

Then the almost generalized complex structure, $J_N$, induced by $(P,I)$ and the choice of $N$, is defined by the canonical line bundle
\begin{equation}\label{defining spinor}
\kappa = e^{i\omega} \^ (\^ ^k N_{1,0}^*) = e^{i\omega} \^ \kappa_I
\end{equation}
\end{defn}

This construction is general:

\begin{prop}\label{construction is general}
Let $J$ be an almost generalized complex structure inducing $(P,I)$.  Then for any choice of complementary distribution $N \subset TM$, $J$ is equivalent, up to a $B$-transform, to the structure $J_N$.
\end{prop}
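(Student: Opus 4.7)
The plan is to use the pointwise exponential form of $\kappa_J$ given by Proposition \ref{pointwise form of J}, extract the global complex 2-form class encoded in $\kappa_J$, and then modify it to obtain a real 2-form $B$ implementing the $B$-transform between $J$ and $J_N$.

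By Proposition \ref{pointwise form of J} and the regularity of $J$, locally $\kappa_J = e^{B'+i\omega'} \wedge \kappa_I$ with $B', \omega'$ real 2-forms, where $\omega'|_{\wedge^2 S} = \omega$ since $J$ induces $P$. By Remark \ref{S and I well-defined}, although the pair $(B', \omega')$ is not canonical, the combined form descends to a well-defined global section $\Omega$ of $\wedge^2(S_\C + N_{0,1})^* \cong \wedge^2 T^*_\C M / (N^*_{1,0} \wedge T^*_\C M)$. It suffices to find a global real 2-form $B$ on $M$ such that $B + i\omega^N$ represents $\Omega$, i.e., differs from $B' + i\omega'$ locally by an element of $N^*_{1,0} \wedge T^*_\C M$: granted this, wedging with $\kappa_I = \wedge^k N^*_{1,0}$ annihilates any $N^*_{1,0}$ factor, so $e^{B+i\omega^N} \wedge \kappa_I = e^{B'+i\omega'} \wedge \kappa_I = \kappa_J$, and the left side is $e^B \wedge \kappa_{J_N}$.

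To construct $B$, choose any global smooth lift $\tilde\Omega \in \Gamma(\wedge^2 T^*_\C M)$ of $\Omega$ and split it into real and imaginary parts, $\tilde\Omega = \tilde B + i\tilde\omega$. The difference $\tilde\omega - \omega^N$ is a real 2-form that vanishes on $\wedge^2 S$, so it lies in $N^* \wedge T^* M$. One then seeks $\alpha \in N^*_{1,0} \wedge T^*_\C M$ with $\textrm{Im}(\alpha) = \omega^N - \tilde\omega$; setting $B := \tilde B + \textrm{Re}(\alpha)$ yields the required form. The existence of such an $\alpha$ is a pointwise linear-algebra problem: using the splitting $T^*_\C M = S^*_\C \oplus N^*_{1,0} \oplus N^*_{0,1}$ determined by $N$ and $I$, one writes $\alpha$ as an explicit linear expression in the components of $\omega^N - \tilde\omega$ and checks consistency with reality. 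Since the whole construction is pointwise and smooth, it delivers a global smooth real $B$.

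The main work is this linear-algebra verification that $\alpha$ with prescribed imaginary part exists inside $N^*_{1,0} \wedge T^*_\C M$; this reduces to tracking how each summand in the complex-type decomposition transforms under complex conjugation, and the only potential obstruction, the $\wedge^2 S$ component of the imaginary part, vanishes by the hypothesis that $J$ induces $P$. With $B$ in hand, the identification $\kappa_J = e^B \wedge \kappa_{J_N}$ is immediate, completing the proof.
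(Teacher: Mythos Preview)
Your argument is correct and follows essentially the same strategy as the paper: both show that the well-defined class of $B+i\omega$ in $\Gamma(\wedge^2(S_\C \oplus N_{0,1})^*)$ admits a representative of the form $B + i\omega^N$ with $B$ real and $\omega^N$ the specific extension from Definition~\ref{definition from N}, whence $e^{-B}\wedge\kappa_J = e^{i\omega^N}\wedge\kappa_I = \kappa_{J_N}$. The paper simply invokes the canonical lift in $\Ann(N_{1,0})$ determined by the splitting and asserts the conclusion in one line, whereas you pick an arbitrary lift and explicitly carry out the pointwise linear algebra---checking that any real $2$-form in $N^*\wedge T^*M$ arises as the imaginary part of some element of $N^*_{1,0}\wedge T^*_\C M$---to adjust the imaginary part to $\omega^N$ while keeping $B$ real. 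Your version is more careful about exactly this reality issue, which the paper's compressed wording leaves to the reader; otherwise the two proofs coincide.
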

\begin{proof}
As per Proposition \ref{pointwise form of J}, the canonical line bundle of $J$ is
$$\kappa_J = e^{B+i\omega} \^ \kappa_I,$$
where $B+i\omega$ is a section of $\^ ^2 \left(S_\C \dsum N_{0,1}\right)^*$.  By choosing $N \subset TM$ and specifying $B+i\omega \in \Ann(N_{1,0})$, we extend $B+i\omega$ to a section of $\^ ^2 T^*_\C M$.  $\omega$ extended in this way will be the same $\omega$ as in Definition \ref{definition from N} above.  Then
$$e^{-B} \^ \kappa_J = e^{i\omega} \^ \kappa_I.$$
\end{proof}

\subsection{Easy case}

Using Definition \ref{definition from N}, we can answer our question in the affirmitive in some special cases.  The following was originally observed by Cavalcanti \cite{Cavalcanti}:

\begin{prop}
If the leafwise-closed symplectic form $\omega$ extends to a closed form $\tilde\omega$ on $M$, then $(P,I)$ are generalized complex, integrable with curvature $H=0$.
\end{prop}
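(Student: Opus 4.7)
The plan is to build the desired generalized complex structure directly from the closed extension, by taking its canonical line bundle to be
\[
\kappa = e^{i\tilde\omega} \wedge \kappa_I,
\]
where $\kappa_I$ is viewed inside $\wedge^\bullet T^*_\C M$ via the inclusion $N^*_\C S \hookrightarrow T^*_\C M$. This is the construction of Definition \ref{definition from N}, but with the preferred closed $\tilde\omega$ replacing the extension of $\omega$ built from a choice of complement $N$---so no choice of $N$ enters at all.

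First I would verify that $\kappa$ is the canonical bundle of an almost generalized complex structure inducing $(P,I)$. A local generator has the form $\rho = e^{i\tilde\omega}\wedge\rho_0$ for a local generator $\rho_0$ of $\kappa_I$; purity of $\rho$ follows because $\rho_0$ is pure and $e^{i\tilde\omega}$ is the Clifford exponential of an even-degree (complex) form, i.e.\ a $B$-transform, which preserves purity. For real rank zero, if $(X,\xi)\in L_\rho\cap\overline{L_\rho}$, then $X \in (S_\C+N_{0,1})\cap(S_\C+N_{1,0}) = S_\C$, and the null-space formula under a $B$-transform forces $\iota_X\tilde\omega|_S=0$; nondegeneracy of $\omega=\tilde\omega|_S$ then gives $X=0$, whence also $\xi=0$. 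The lowest-degree component of $\kappa$ is $\kappa_I$, so by Remark \ref{S and I well-defined} the induced transverse data are $(S,I)$; taking $B+i\omega = i\tilde\omega$ in Proposition \ref{pointwise form of J} shows the induced leafwise symplectic form is $\tilde\omega|_S = \omega$, recovering $P$.

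For integrability with curvature $H=0$, I would use integrability of $I$ to choose, about each point, a foliation chart with transverse holomorphic coordinates $(z_1,\ldots,z_k)$; then $\rho_0 = dz_1\wedge\cdots\wedge dz_k$ is a $d$-closed local generator of $\kappa_I$. Hence $\rho = e^{i\tilde\omega}\wedge\rho_0$ is a local generator of $\kappa$, and since $\tilde\omega$ is closed,
\[
d\rho = i\,e^{i\tilde\omega}\wedge d\tilde\omega\wedge\rho_0 + e^{i\tilde\omega}\wedge d\rho_0 = 0.
\]
By the remark following Definition \ref{define integrability}, the existence of such a $d$-closed local generator about each point is sufficient for $H$-integrability with $H=0$.

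I do not expect a genuine obstacle: closedness of $\tilde\omega$ passes through the Clifford exponential immediately (because $\tilde\omega$ is of even degree), and the purity and real-rank-zero checks reduce to standard linear algebra of pure spinors under $B$-transforms. The content of the proposition is really the observation that whenever a closed global extension of $\omega$ exists, the rest of the paper's machinery becomes unnecessary; the interesting question, pursued in later sections, is what to do when no such $\tilde\omega$ can be found.
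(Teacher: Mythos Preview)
Your argument is correct and follows exactly the paper's approach: define the canonical line bundle as $e^{i\tilde\omega}\wedge\kappa_I$ and observe it admits $d$-closed local generators. The paper states this in a single sentence, leaving the purity, real-rank-zero, and induced-data checks implicit (they follow from Proposition~\ref{pointwise form of J}), whereas you spell them out; but the strategy is identical.
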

\begin{proof}
The generalized complex structure is defined by the canonical line bundle
$$e^{i\tilde\omega} \^ \kappa_I,$$
which admits closed local sections.
\end{proof}

\begin{cor}\label{complementary foliation}
If $\sr{S}$ admits a complementary foliation $\sr{R}$, for which $\omega$ is constant in the directions of $R$ (i.e., if $\nabla^\sr{R}$ is the Bott connection of $R$, then $\nabla^\sr{R} \omega = 0$), then $(P,I)$ are generalized complex.
\end{cor}
\begin{proof}
We choose the complementary foliation $N = T\sr{R} \subset TM$.  As in Definition \ref{definition from N}, we extend $\omega$ to $TM$ by specifying that $\ker(\omega) = N$.  Since $\omega$ is constant along the directions of $\sr{R}$, and $\omega$ is leafwise closed on $\sr{S}$, then on the total space $d\omega=0$.  (We can see this by expressing a neighbourhood as a product decomposition for $\sr{S}$ and $\sr{R}$.)
\end{proof}

\subsection{Smooth symplectic families}

It is clarifying to consider our problem in the case where the symplectic foliation actually comes from a fibre bundle.  A transversely holomorphic symplectic foliation whose leaf space is smooth is the same thing as a fibre bundle with complex base and symplectic fibres.  Note, however, that we are not talking here about ``symplectic fibre bundles,'' since in this case there may not be local \emph{symplectic} trivializations.

\begin{defn}\label{define smooth symplectic family}
A \emph{smooth symplectic family} over a complex manifold $B$ is a fibre bundle $\pi:X \to B$ with a Poisson structure whose symplectic leaves coincide with the fibres.  By pullback from $B$, it inherits a complex structure transverse to the symplectic foliation.

We say that a smooth symplectic family over a complex manifold is \emph{generalized complex} if its Poisson structure and transverse complex structure are induced by a generalized complex structure.
\end{defn}

In this case, choosing a complementary distribution $N$ is just choosing the horizontal distribution of a connection.  Integrability of $N$ corresponds to flatness of the connection.

\begin{example}
As we remarked, smooth symplectic families are not in general symplectic fibre bundles; however, if $X \to B$ \emph{is} a symplectic fibre bundle over a complex base $B$, with flat symplectic connection (i.e., one which preserves the symplectic form), then it will be generalized complex, since the complex structure on $B$ pulls back to a transverse complex structure on $X$, and the flat symplectic connection gives us precisely the complementary foliation needed in Corollary \ref{complementary foliation}.
\end{example}

\section{Integrability---the general case}\label{integrability conditions}

If $J$ is a generalized complex structure inducing $(P,I)$, then for \emph{any} choice of $N$ complementary to $S$, $J$ is equivalent via a $B$-transform to the almost generalized complex structure $J_N$ in definition \ref{definition from N}.  But then $J_N$ is integrable---if $J$ was $H'$--integrable and $e^B\cdot J = J_N$, then $J_N$ is $H$--integrable for $H=H'+dB$.  Contrapositively, if $J_N$ is not integrable, then there is no generalized complex structure $J$ inducing $(P,I)$.

Therefore, we answer the question of whether $(P,I)$ comes from a generalized complex structure by choosing any complementary $N$, and then testing to see if $J_N$ is integrable for some closed 3-form $H$.

Fix the choice $N \subset TM$ complementary to $S$.  Suppose that $H$ is a closed 3-form such that $J_N$ is $H$-integrable.  We will study two types of conditions on $H$---the $H$-integrability of $J_N$, which gives equations relating $H$ to the symplectic form $\omega$, and the closedness condition  $dH=0$.  Taken together, these give a cohomological condition (Theorem \ref{exact in truncated}) on a quantity derived from $\omega$, i.e., an obstruction map.

We will decompose the obstruction according to a grading (Proposition \ref{H iff alpha and beta}).  One component of the obstruction, which is both necessary and sufficient in certain low-dimensional cases, is easier to understand than the others, and we give it special attention in Section \ref{the pluriharmonic condition}.  Later, we compute the remaining components of the obstruction by using a spectral sequence (summarized in Theorem \ref{cohomology condition}).
 
\subsection{Trigrading and the decomposition of $d$}\label{trigrading}

\begin{notn}
We use $\Omega^\bullet(M)$ to denote the smooth differential forms on $M$.  Sometimes we indicate a degree in place of $\bullet$, and sometimes we omit $M$ when it is clear which manifold we are considering.

The symplectic foliation along with the choice of $N$ allow us to decompose forms in a couple of ways.  We use $(j;k)$ (separated by a semicolon) to indicate degree $j$ in $N^*$ and degree $k$ in $S^*$.  But $\^ ^\bullet N^*$ further decomposes by the complex bigrading.  We use $(i,j)$ (separated by a comma) to indicate a complex bidegree, and $(i,j;k)$ to indicate a tri-degree in $N^*_{1,0}$, $N^*_{0,1}$ and $S^*$ respectively.

For example,
$$\Omega^{n;k} = \bigoplus_{i+j=k} \Omega^{i,j;k}.$$
\end{notn}

\begin{rem}
The $(n;k)$ bigrading---without the further decomposition into the trigrading---often appears in the theory of coupling forms and symplectic fibrations (see, eg., \cite{CrainicFernandes}).
\end{rem}

If $N$ were integrable, then we could decompose the exterior derivative as $d = \nabla + d_S$, where $\nabla$ had degree $(1;0)$ and $d_S$ had degree $(0;1)$.  Furthermore, $\nabla$ would decompose as $\del + \bar\del$ according to the transverse complex structure.  Each of these components would square to zero and we would have a triple complex.  However, since $N$ may not be integrable, we will have an additional curvature term.

\newdiagramgrid{spectral}%
{1,1,1,1,1,1,1,1,1,1,1}%
{1,.5,.5,.5,.5,.5,.5,.5,.5,.5,.5,.5,.5}

\begin{figure}
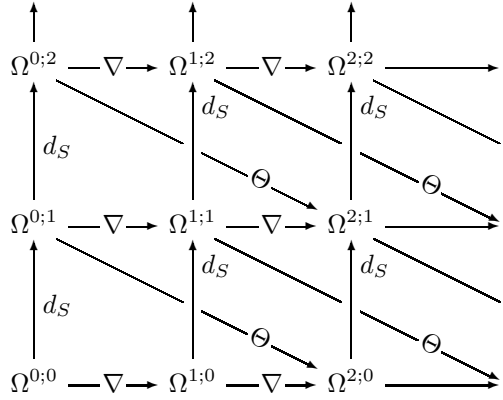
\label{spectral fig}
\begin{diagram}[grid=spectral]
\uTo &  & \uTo & & \uTo & {} & {} \\
\Omega^{0;2}  &  \rTo~\nabla  &  \Omega^{1;2}  &  \rTo~\nabla  &  \Omega^{2;2}  &  \rTo  &  {} \\
  &  \rdLine(2,2)  &  \uTo>{d_S}  &  \rdLine(2,2)  &  \uTo>{d_S}  &  \rdLine(2,2)  &  {} \\
\uTo>{d_S}  &    &    &    &    &   &  {} \\
  &    &    &  \rdTo(2,2)~\Theta  &    &  \rdTo(2,2)~\Theta  &  {} \\
\Omega^{0;1}  &  \rTo~\nabla  &  \Omega^{1;1}  &  \rTo~\nabla  &  \Omega^{2;1}  &  \rTo  &  {} \\
  &  \rdLine(2,2)  &  \uTo>{d_S}  &  \rdLine(2,2)  &  \uTo>{d_S}  &  \rdLine(2,2)  &  {}\\
\uTo>{d_S}  &    &    &    &    &   &  {}\\
  &    &    &  \rdTo(2,2)~\Theta  &    &  \rdTo(2,2)~\Theta  &  {}\\
\Omega^{0;0}  &  \rTo~\nabla  &  \Omega^{1;0}  &  \rTo~\nabla  &  \Omega^{2;0}  &  \rTo  &  {}\\
\end{diagram}
\caption{A diagram showing the action of each real component of $d$ on the real $(j;k)$ bigrading.  It suggests a spectral sequence with three successive differentials lying on a diagonal.}
\end{figure}

\begin{lem}\label{d decomposition}
$d$ decomposes as
\begin{equation}\label{d formula}
d = \underset{1;0}{\nabla} + \underset{2;-1}{\Theta} + \underset{0;1}{d_S}.
\end{equation}
(Under each term we indicate the degree of the operator.)  $\Theta$ acts as a tensor in \mbox{$\Gamma(\^ ^2 N^* \tens S)$,} by contracting in $S$ and wedging in $N^*$.

Furthermore, over the complex forms $\nabla$ decomposes as
$$\nabla = \underset{1,0;0}{\del} + \underset{0,1;0}{\bar\del}$$
and $\Theta$ as
$$\Theta = \underset{2,0;-1}{\theta_+} + \underset{1,1;-1}{\theta_0} + \underset{0,2;-1}{\theta_-}.$$
$\del$ and $\bar\del$ are complex-conjugate, as are $\theta_+$ and $\theta_-$, and $\theta_0$ is real.
\end{lem}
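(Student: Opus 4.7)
The plan is to exploit that $d$ is a derivation of $\Omega^\bullet(M)$, so its bigraded components are themselves derivations, hence determined by their action on $C^\infty(M)$ and on $1$-forms.

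In the $(N^*;S^*)$ bigrading, $d$ a priori splits as a sum of pieces of bidegrees $(a;b)$ with $a+b=1$. For a function $f$, $df$ lies in $\Gamma(N^*)\dsum\Gamma(S^*)$, so only the $(1;0)$ and $(0;1)$ shifts are nonzero on $C^\infty(M)$; these are $\nabla$ and $d_S$. Every other bigraded component vanishes on $C^\infty(M)$ and is therefore $C^\infty(M)$-linear. For $\alpha\in\Gamma(N^*)=\Gamma(\Ann(S))$ and $X,Y\in\Gamma(S)$, the standard formula for $d$ on $1$-forms reduces to $d\alpha(X,Y)=-\alpha([X,Y])$, which vanishes by involutivity of $S$; this kills the $(-1;2)$ shift. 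For $e\in\Gamma(S^*)$, the only new tensorial shift available is $(2;-1)$; I call this $\Theta$, and it measures the failure of $N$ to be involutive. Farther shifts of $d$ would send any $1$-form to a form of negative (hence invalid) bidegree, so vanish identically. Since $\Theta$ vanishes on $N^*$ and on functions, it is a tensor in $\Gamma(\^ ^2 N^* \tens S)$ acting by wedging in $N^*$ and contracting in $S$.

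For the complex decomposition I refine to the tri-grading via $N^*_\C=N^*_{1,0}\dsum N^*_{0,1}$. Each trigraded component is again a derivation, tensorial when it vanishes on functions. $\Theta$ has bigrading $(2;-1)$, so its only non-invalid tri-shifts are $(2,0;-1),(1,1;-1),(0,2;-1)$, giving $\theta_+,\theta_0,\theta_-$ automatically. For $\nabla$ I must rule out the tri-shifts $(2,-1;0)$ and $(-1,2;0)$. Integrability of $I$ gives Lie-involutivity of both $K=N_{0,1}+S_\C$ and $\cnj K=N_{1,0}+S_\C$. For $\alpha\in\Gamma(N^*_{0,1})$ (which annihilates $\cnj K$) and $X,Y\in\Gamma(\cnj K)$, the formula for $d$ reduces to $d\alpha(X,Y)=-\alpha([X,Y])=0$; specializing to $X,Y\in\Gamma(N_{1,0})$ isolates and kills the $(2,0;0)$-component of $d\alpha$. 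The symmetric computation with $\beta\in\Gamma(N^*_{1,0})$ (using involutivity of $K$) kills the $(0,2;0)$-component of $d\beta$. By tensoriality these vanishings eliminate the $(2,-1;0)$ and $(-1,2;0)$ shifts of $\nabla$ everywhere, leaving $\nabla=\del+\bar\del$. The conjugation claims follow because $d$ is real and complex conjugation swaps tri-shifts $(p,q;c)\leftrightarrow(q,p;c)$, pairing $\del$ with $\bar\del$ and $\theta_+$ with $\theta_-$, while $\theta_0$ is self-conjugate and hence real.

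The main obstacle is bookkeeping rather than conceptual difficulty: one must carefully track which integrability hypothesis---of the symplectic foliation or of the transverse complex structure---eliminates which a priori bigraded or trigraded shift of $d$, and verify systematically that no further shifts survive.
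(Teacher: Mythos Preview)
Your argument is correct. The paper itself does not give a proof of this lemma; it simply refers the reader to the literature, citing a real version as \cite[Proposition 10.1]{BGV1992}. Your proposal is therefore more detailed than anything in the paper and supplies exactly the standard computation that the author chose to elide. The key points you isolate---that involutivity of $S$ kills the $(-1;2)$ shift of $d$, and that integrability of the transverse complex structure (via Lie-involutivity of $K=N_{0,1}+S_\C$ and its conjugate) kills the $(2,-1;0)$ and $(-1,2;0)$ shifts of $\nabla$---are precisely what is needed, and you have tracked the degree bookkeeping correctly.
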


We omit a proof of Lemma \ref{d decomposition}, since similar results are found in the literature.  See, for example, a real version in \cite[Proposition 10.1]{BGV1992}.

In the case of a fibre bundle with connection, $\nabla$ is just the covariant derivative and $\Theta$ is its curvature.  Figure 1 shows the action of each real component of $d$ acting on the $(j;k)$ bigrading.

\begin{rem}\label{quadratic relations}
It is not the case that each term in the decomposition of $d$ squares to zero; but, of course, $d^2=0$, and by decomposing this equation according to degree we may find quadratic relations between terms.  In the full complex trigrading, these relations may be summarized thus: the terms other than $\del$, $\bar\del$ and $\theta_0$ \emph{do} square to zero, and the terms pairwise anticommute, with the exception of those pairs occuring in the following special anticommutation relations:
\begin{eqnarray}
\del^2 + d_S \theta_+ + \theta_+ d_S &=& 0 \qquad \textnormal{(and the conjugate relation)} \label{AC1} \\
\del\bar\del + \bar\del\del + \theta_0 d_S + d_S \theta_0 &=& 0 \label{AC2} \\
\theta_+\theta_- + \theta_-\theta_+ + \theta_0^2 &=& 0 \label{AC3} \\
\theta_+\bar\del + \bar\del\theta_+ + \theta_0\del + \del\theta_0 &=& 0 \qquad \textnormal{(and the conjugate relation)} \label{AC4}
\end{eqnarray}
Analogous but simpler relations hold between the real counterparts $\nabla$, $\Theta$ and $d_S$.
\end{rem}

\subsection{Truncated complex}\label{truncated complex}
We give one further decomposition which will be useful.

\begin{defn}
The \emph{truncated de Rham complex} $\Omega^\bullet_\Tc$ consists of the forms
$$\Omega^{1,0;0} \^ \Omega^\bullet = \bigoplus_{i\geq1}\Omega^{i,\bullet;\bullet},$$
that is, those forms which have at least one degree in $N^*_{1,0}$.  The \emph{real} truncated de Rham complex, $\Omega^\bullet_T$, consists of the real forms in $\Omega^\bullet_\Tc$; in other words, those (real) forms which have at least one degree in \emph{both} $N^*_{1,0}$ and $N^*_{0,1}$.  As we see in Figure 2, this omits the ``outside edges'' of the complex bigrading.
\end{defn}

Note that $\Omega^\bullet_T$ and $\Omega^\bullet_\Tc$ are each differential complexes for the full exterior derivative $d$.  

\begin{figure}
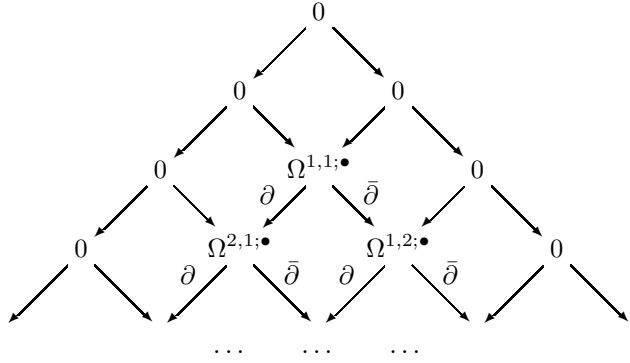

\begin{diagram}[height=1.5em,width=1.5em,nohug]
& & & & & & & & 0 & & & & & & & & \\
& & & & & & & \ldTo & & \rdTo & & & & & & & \\
& & & & & & 0 & & & & 0 & & & & & & \\
& & & & & \ldTo & & \rdTo & & \ldTo & & \rdTo & & & & & \\
& & & & 0 & & & & \Omega^{1,1;\bullet} & & & & 0 & & & & \\
& & & \ldTo & & \rdTo & & \ldTo^{\del} & & \rdTo^{\bar\del} & & \ldTo & & \rdTo & & & \\
& & 0 & & & & \Omega^{2,1;\bullet} & & & & \Omega^{1,2;\bullet} & & & & 0 & & \\
& \ldTo & & \rdTo & & \ldTo^{\del} & & \rdTo^{\bar\del} & & \ldTo^{\del} & & \rdTo^{\bar\del} & & \ldTo & & \rdTo & \\
& & & & & & & & & & & & & & & & 
\end{diagram}
\ldots \qquad \ldots \qquad \ldots
\caption{The real truncated de Rham complex $\Omega^\bullet_T$.}
\label{fig:differentials}
\end{figure}

Recall that $K = N_{0,1} \dsum S_\C$ is the distribution whose involutivity determines the integrability of the transverse complex structure.  We may decompose the forms as
\begin{eqnarray*}
\Omega^\bullet &=& \Omega^{0,\bullet;\bullet} \dsum \left(\Omega^{1,0;0} \^ \Omega^\bullet\right) \\
&=& \Gamma(\^ ^\bullet K^*) \dsum \Omega^\bullet_\Tc.
\end{eqnarray*}
If $\alpha$ is a form, we denote projection to the first summand in this decomposition by $\alpha|_K$.  We define an operator $d_K$ such that
$$d_K\alpha = (d\alpha)|_K.$$

\subsection{Relation of $H$ to $\omega$}

\begin{prop}\label{H constraint lemma}
Let $H$ be a real closed 3-form.  Then $J_N$, as in Definition \ref{definition from N}, is $H$-integrable if and only if
\begin{equation}
H|_K + id_K\omega = 0. \label{constraint on H}
\end{equation}
Equivalently, $J_N$ is $H$-integrable if  and only if the following equations hold:
\begin{eqnarray}
H^{0,0;3} &=& 0, \label{H1} \\
H^{0,1;2} &=& -i\bar\del\omega, \label{H2} \\
H^{0,2;1} &=& -i\theta_-\omega \quad\textnormal{and} \label{H3} \\
H^{0,3;0} &=& 0. \label{H4}
\end{eqnarray}
\end{prop}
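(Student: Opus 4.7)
The plan is to work with an explicit local generating section. Fix a local trivialization $\Omega$ of $\kappa_I = \wedge^k N_{1,0}^*$; then $\rho = e^{i\omega}\wedge\Omega$ is a local generator of the canonical line bundle of $J_N$ (Definition \ref{definition from N}). By Definition \ref{define integrability} together with the remark following it, $H$-integrability of $J_N$ is equivalent to the existence of $(X,\xi) \in \Gamma(T_\C M \dsum T^*_\C M)$ with $d_H\rho = (X,\xi)\cdot\rho$. So the entire statement will follow by computing both sides and comparing them component by component under the trigrading of Section \ref{trigrading}.

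For the left side, a direct derivation calculation gives $d(e^{i\omega}) = i\,d\omega\wedge e^{i\omega}$ (using that $i\omega$ is even, so $e^{i\omega}$ commutes past odd forms), hence
$$d_H\rho = e^{i\omega}\wedge\bigl[(id\omega + H)\wedge\Omega + d\Omega\bigr].$$
For the right side, the same derivation identity yields $\iota_X(e^{i\omega}) = i\iota_X\omega \wedge e^{i\omega}$, so that
$$(X,\xi)\cdot\rho = e^{i\omega}\wedge(X,\, \xi + i\iota_X\omega)\cdot\Omega.$$
Since $e^{i\omega}$ is invertible under $\wedge$, integrability reduces to the equation $(id\omega + H)\wedge\Omega + d\Omega = (X,\xi')\cdot\Omega$ for some $(X,\xi')$, where $\xi'$ absorbs $i\iota_X\omega$.

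Now I decompose by the trigrading. The section $\Omega$ has pure tridegree $(k,0;0)$, so wedging with $\Omega$ annihilates any form carrying a positive $N_{1,0}^*$-degree; thus $(id\omega + H)\wedge\Omega$ depends only on $(id\omega + H)|_K$, the $(0,\bullet;\bullet)$-part. Since $\omega$ lies in $\Gamma(\wedge^2 S^*)$, the part $d\omega|_K$ consists exactly of the components $\bar\del\omega$ in tridegree $(0,1;2)$ and $\theta_-\omega$ in tridegree $(0,2;1)$, because $d_S\omega = 0$ by leafwise closedness of $\omega$ and the remaining tridegrees $(0,0;3)$ and $(0,3;0)$ receive no contribution. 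So $(id\omega+H)|_K = id_K\omega + H|_K$ is the sum over the four tridegrees displayed in \eqref{H1}--\eqref{H4}. Meanwhile $(X,\xi')\cdot\Omega = \iota_X\Omega + \xi'\wedge\Omega$ lies in tridegrees $(k-1,0;0)$, $(k,1;0)$ and $(k,0;1)$ only; and $d\Omega$, by the decomposition in Lemma \ref{d decomposition}, a priori has components $\bar\del\Omega$, $d_S\Omega$ and $\theta_-\Omega$ in tridegrees $(k,1;0), (k,0;1), (k,2;-1)$.

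The final step is to match tridegrees. Integrability of $I$ is precisely the condition that $K$ is involutive, equivalently that $\bar\del\Omega$ and $d_S\Omega$ lie in the differential ideal generated by $\Omega$ and that $\theta_-\Omega = 0$; therefore the $(k,1;0)$ and $(k,0;1)$ components of the proposed equation can always be solved by an appropriate choice of the $(0,1;0)$- and $(0,0;1)$-parts of $\xi'$, and the $(k,2;-1)$ component is automatic. The $(k-1,0;0)$ component forces the $N_{1,0}$-part of $X$ to vanish, with no consequence for $H$. What remains are the four components in tridegrees $(k,0;3), (k,1;2), (k,2;1), (k,3;0)$, none of which appears on the right; each must therefore vanish, producing exactly the equations \eqref{H1}--\eqref{H4}, which together are precisely $(id\omega + H)|_K = 0$, that is, $H|_K + id_K\omega = 0$. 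The main bookkeeping hazard is confirming that no spurious cross-terms arise, which is controlled entirely by the fact that wedging anything with an $N_{1,0}^*$-factor by $\Omega$ yields zero.
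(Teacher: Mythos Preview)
Your argument is correct and follows the same overall route as the paper: apply $d_H$ to the local generator and strip off the $K$-component using the trigrading. The paper's version is shorter because it chooses the local generator of $\kappa_I$ to be \emph{closed}---such a choice exists since $K=S_\C\oplus N_{0,1}$ is involutive, so Frobenius provides local functions $z_1,\dots,z_k$ with $dz_1\wedge\cdots\wedge dz_k$ generating $\kappa_I$---and this kills the $d\Omega$ term outright, so the integrability equation collapses immediately to $\rho\wedge(id\omega+H)=0$. You instead keep a general $\Omega$ and absorb $\bar\del\Omega$ and $d_S\Omega$ into the Clifford-action side, which also works. One small correction: that absorption does not actually use integrability of $I$; any $(k,1;0)$- or $(k,0;1)$-form is automatically of the form $\eta\wedge\Omega$ simply because $\wedge^k N_{1,0}^*$ is a line bundle generated by $\Omega$, and $\theta_-\Omega$ vanishes for degree reasons alone. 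Integrability of $I$ enters your argument only implicitly, through the shape of the decomposition of $d$ in Lemma~\ref{d decomposition}.
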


\begin{proof}
Locally, the canonical bundle of $J_N$ is generated by a section $\rho \^ e^{i\omega}$, where $\rho \in \Gamma(\kappa_I)$ is some local, closed generator for the canonical bundle of the transverse complex structure.  The $H$-integrability of $J_N$ says that
\begin{eqnarray*}
0 &=& d_H (\rho \^ e^{i\omega}) \\
&=& \rho \^ (d_H e^{i\omega}) \\
&=& \rho \^ (d + H\^) e^{i\omega} \\
\Leftrightarrow 0 &=& \rho \^ (i d \omega + H)
\end{eqnarray*}
In this equation, any component of $id\omega+H$ that has nonzero degree in $N_{1,0}^*$ annihilates with $\rho$ and provides no constraint; thus we have Equation \eqref{constraint on H}.  Looking in each degree, for all $j$ and $k$ we should have
$$-i(d \omega)^{0,j;k} = H^{0,j;k}.$$

Equations \eqref{H2}, \eqref{H3} and \eqref{H4} are just selected degrees of this condition, according to the trigrading.  The $(0,0;3)$-degree component of this condition is $H^{0,0;3} = -i d_S \omega$; however, we supposed that $\omega$ was leafwise-closed, i.e., $d_S\omega=0$, and so we get Equation \eqref{H1}.
\end{proof}

Since $H$ is real, we have that $H^{i,j;k} = \overline{H}^{j,i;k}$, thus these equations also determine $H^{1,0;2}$, $H^{2,0;1}$ and $H^{3,0;0}$. There remain two free terms, $H^{1,1;1}$ and $H^{2,1;0}$.  ($H^{1,2;0}$ must be conjugate to $H^{2,1;0}$.)

\subsection{Closedness of $H$}

\begin{lem}
$d_K^2=0$, and if $\alpha$ is a form then $dd_K\alpha \in \Omega^\bullet_T$.
\end{lem}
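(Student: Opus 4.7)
The plan is to derive both claims from $d^2 = 0$ together with a single degree-counting observation. The key structural fact is that $d$ preserves the subcomplex $\Omega^\bullet_\Tc$: by Lemma \ref{d decomposition}, each summand in $d = \partial + \bar\partial + \theta_+ + \theta_0 + \theta_- + d_S$ shifts the $N^*_{1,0}$-degree by the nonnegative amount $1$, $0$, $2$, $1$, $0$, $0$ respectively, so no summand can lower that degree. (This is also asserted in the preceding paragraph of the text, so one may cite it directly.)

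Given any form $\alpha$, I would decompose $d\alpha = d_K\alpha + \beta$ where $\beta := d\alpha - d_K\alpha \in \Omega^\bullet_\Tc$ by the very definition of the projection $|_K$. Applying $d$ once more and using $d^2 = 0$ gives $dd_K\alpha = -d\beta$; since $d\beta \in \Omega^\bullet_\Tc$ by the stability observation, we conclude $dd_K\alpha \in \Omega^\bullet_\Tc$. Projecting this inclusion onto the complementary summand $\Omega^{0, \bullet; \bullet}$ immediately yields $d_K^2\alpha = (dd_K\alpha)|_K = 0$, establishing the first claim.

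For the second claim, the membership $dd_K\alpha \in \Omega^\bullet_\Tc$ already guarantees that each tri-degree component has $N^*_{1,0}$-degree at least one; the refinement to $\Omega^\bullet_T$ further requires $N^*_{0,1}$-degree at least one. I would obtain this by running the parallel argument for the conjugate projection $d_{\bar K}\alpha := (d\alpha)^{\bullet, 0; \bullet}$: the identical computation yields $dd_{\bar K}\alpha \in \overline{\Omega^\bullet_\Tc}$, so every tri-degree component there has $N^*_{0,1}$-degree at least one. Because $d\alpha$ is determined by $\alpha$ via a \emph{real} operator, $d_K\alpha$ and $d_{\bar K}\alpha$ are (for real $\alpha$) complex conjugates, so the two inclusions combine to force $dd_K\alpha \in \Omega^\bullet_\Tc \cap \overline{\Omega^\bullet_\Tc}$, which is exactly the complexification of $\Omega^\bullet_T$.

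The main obstacle is this last reality/conjugation step: one must verify that the splitting $\Omega^\bullet = \Omega^{0, \bullet; \bullet} \oplus \Omega^\bullet_\Tc$ interacts well enough with complex conjugation so that the two degree bounds genuinely combine for the form $dd_K\alpha$. Should any residual cross-terms obstruct this, I expect the quadratic relations of Remark \ref{quadratic relations}---especially the anticommutation identities \eqref{AC1}--\eqref{AC4} relating $\partial$, $\bar\partial$, $d_S$, and the $\theta$-terms---to be exactly the algebraic tool needed to cancel them; the rest of the proof is just bookkeeping in the trigrading.
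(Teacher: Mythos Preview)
Your first two paragraphs are correct and amount to the paper's proof rearranged: the paper first reads off $d_K^2=0$ from the $(0,j;k)$-component of $d^2=0$, then writes $dd_K\alpha=(\partial+\theta_+ +\theta_0)d_K\alpha+d_K^2\alpha$ and notes that the surviving summand has positive $N^*_{1,0}$-degree, i.e.\ lies in $\Omega^\bullet_\Tc$. Your version (split $d\alpha=d_K\alpha+\beta$ with $\beta\in\Omega^\bullet_\Tc$, apply $d$, use that $d$ preserves $\Omega^\bullet_\Tc$) is the same idea packaged slightly differently, and your deduction of $d_K^2=0$ by projecting is equivalent.

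The genuine gap is in your conjugation step. From $d_{\bar K}\alpha=\overline{d_K\alpha}$ (for real $\alpha$) and $dd_{\bar K}\alpha\in\overline{\Omega^\bullet_\Tc}$ you only obtain $\overline{dd_K\alpha}\in\overline{\Omega^\bullet_\Tc}$, which is just $dd_K\alpha\in\Omega^\bullet_\Tc$ again; the two inclusions are about conjugate objects, not the same object, so they do not combine to give $dd_K\alpha\in\Omega^\bullet_\Tc\cap\overline{\Omega^\bullet_\Tc}$. Nor will the relations of Remark~\ref{quadratic relations} rescue this: the stronger claim is simply false for general $\alpha$. For a real function $f$ one has $d_Kf=\bar\partial f+d_Sf$ and hence $dd_Kf=(\partial+\theta_++\theta_0)(\bar\partial f+d_Sf)$, whose $(1,0;1)$-component $\partial d_Sf$ and $(2,0;0)$-component $\theta_+d_Sf$ are generically nonzero, so $dd_Kf$ has components with $N^*_{0,1}$-degree zero. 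The paper's own proof in fact establishes only $dd_K\alpha\in\Omega^\bullet_\Tc$, and the lemma should be read that way; the passage to $\Omega^\bullet_T$ happens later, in Theorem~\ref{exact in truncated}, where one works with the specific $\omega$ (for which $d_S\omega=0$ forces $d_K\omega$ to have $N^*_{0,1}$-degree $\geq 1$) and then takes the real imaginary part. Your first two paragraphs already prove what is actually shown and used; the third should be dropped.
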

\begin{proof}
That $d_K^2=0$ follows from the vanishing of the $(0,j;k)$ degree of $d^2=0$. But
$$dd_K\alpha = (\del + \theta_+ + \theta_0) d_K \alpha + \d_K \d_K \alpha$$
where the first summand has positive degree in $N_{1,0}^*$, and thus the second claim follows.
\end{proof}

\begin{thm}\label{exact in truncated}
Suppose that $P$ and $I$ are a regular Poisson structure and a transverse complex structure respectively.  Let $N$ be a choice of complementary distribution to the symplectic distribution $S$, let $\omega$ be the thusly--extended coupling symplectic form (as in Definition \ref{definition from N}), and let $d_K$ be as in Section \ref{truncated complex}.

Then $(P,I)$ comes from a generalized complex structure if and only if the imaginary part of $dd_K\omega$ is exact \emph{in the truncated complex} $\Omega^\bullet_T$.
\end{thm}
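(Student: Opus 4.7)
The plan is to combine the two integrability-type constraints we already have in hand. By Proposition \ref{construction is general}, $(P,I)$ is induced by a generalized complex structure if and only if, for our fixed choice of $N$, there is a closed real 3-form $H$ making $J_N$ into an $H$-integrable generalized complex structure. Proposition \ref{H constraint lemma} translates the $H$-integrability into the pointwise condition $H|_K = -i\, d_K\omega$. So the whole question reduces to: does there exist a closed real $H \in \Omega^3$ whose $K$-part equals $-i\, d_K\omega$?

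To analyze this, I would split $H$ into a canonical piece carrying the forced $K$-data and a free piece. Concretely, set
$$H_0 := -i\, d_K\omega + \overline{-i\, d_K\omega},$$
which is real by construction, has $H_0|_K = -i\, d_K\omega$ (there is no overlap between the two summands because $d_K\omega$ has no $(0,0;3)$-component, since $d_S\omega = 0$), and has no components of type $(i,j;k)$ with $i,j\geq 1$. Because the decomposition $\Omega^\bullet = \Gamma(\wedge^\bullet K^*_\C) \oplus \Omega^\bullet_{\mathbb{T}_\C}$ together with reality forces every real form with the correct $K$-part to have the form $H_0 + H_T$ with $H_T \in \Omega^\bullet_T$, the existence of the desired $H$ is equivalent to the existence of $H_T \in \Omega^\bullet_T$ with $d H_T = -d H_0$.

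Next I would compute $dH_0$. Since $d$ is real and commutes with complex conjugation,
$$dH_0 = -i\, dd_K\omega + \overline{-i\, dd_K\omega} = 2\,\Im\bigl(dd_K\omega\bigr).$$
By the lemma preceding the theorem, $dd_K\omega$ lies in $\Omega^\bullet_{\mathbb{T}_\C}$; its imaginary part is real, and a real form in $\Omega^\bullet_{\mathbb{T}_\C}$ automatically has at least one degree in both $N^*_{1,0}$ and $N^*_{0,1}$, hence lies in $\Omega^\bullet_T$. Thus $2\,\Im(dd_K\omega)$ is a well-defined element of $\Omega^\bullet_T$, and the existence of $H$ is equivalent to its exactness in $\Omega^\bullet_T$, which is precisely the statement of the theorem.

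The only real subtlety is bookkeeping: I need to check that the ansatz $H = H_0 + H_T$ with $H_T \in \Omega^\bullet_T$ really captures every possible real $H$ with the prescribed $K$-part, and that the ``imaginary part'' truly lands in the \emph{real} truncated complex rather than merely in its complexification. Both are straightforward from the reality relation $H^{i,j;k} = \overline{H^{j,i;k}}$, which forces any real form with vanishing $(0,j;k)$-components to also have vanishing $(i,0;k)$-components and hence to lie in $\Omega^\bullet_T$; this is the step I would take the most care to spell out.
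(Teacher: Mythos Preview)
Your proposal is correct and follows essentially the same route as the paper's own proof: both arguments write a candidate $H$ as the forced real part $2\Im(d_K\omega)$ plus a free piece in $\Omega^\bullet_T$, and then observe that closedness of $H$ is equivalent to $\Im(dd_K\omega)$ being $d$-exact in the truncated complex. The only place where your write-up should be tightened is the claim that $\Im(dd_K\omega)\in\Omega^\bullet_T$: this does not follow merely from $dd_K\omega\in\Omega^\bullet_\Tc$ (taking imaginary parts does not preserve $\Omega^\bullet_\Tc$ in general), but it does follow here because $d_K\omega$ already has antiholomorphic degree $\geq 1$ (its $(0,0;3)$-piece vanishes since $d_S\omega=0$), and no component of $d$ lowers that degree.
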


Since the imaginary part $\Im(dd_K\omega)$ is real and of degree $4$, the condition of this theorem means it must equal $d\sigma$ for some real $\sigma$ of degrees $(2,1;0) + (1,2;0) + (1,1;1)$.

\begin{proof}
We take the almost generalized complex structure $J_N$ as in Definition \ref{definition from N}, and try to find a real closed 3-form $H$ integrating it.  This succeeds if and only if $(P,I)$ comes from a generalized complex structure.  Such an $H$, if it exists, is determined by Equation \eqref{constraint on H}, the reality condition and the free terms $H^{1,1;1}$ and $H^{2,1;0}=\Bar{H^{1,2;0}}$.  Furthermore, $dH$ must vanish.

We will decompose $H$ according to its component in $\^ ^3 K^*$, its component in $\^ ^3 \Bar{K^*}$, and its component in the truncated complex.  $H^{0,0;3}$ is the component of $H$ lying in both $\^ ^3 K^*$ and $\^ ^3 \Bar{K^*}$.  Since $H^{0,0;3}=0$ (by Equation \eqref{H1}), we have that
\begin{eqnarray*}
H &=& H + H^{0,0;3} \\
&=& H|_K + \overline{H|_K} + H^{2,1;0} + H^{1,2;0} + H^{1,1;1} \\
&=& -id_K\omega + \overline{-id_K\omega} + H^{2,1;0} + H^{1,2;0} + H^{1,1;1} \qquad \textnormal{(by Equation \eqref{constraint on H})} \\
&=& 2\Im(d_K\omega) + H^{2,1;0} + H^{1,2;0} + H^{1,1;1}
\end{eqnarray*}

Then $dH=0$ if and only if
$$-2\Im(dd_K\omega)=d(H^{2,1;0} + H^{1,2;0} + H^{1,1;1}).$$
That is, $\Im(dd_K\omega)$ should be exact in the (real) truncated complex.
\end{proof}

For concreteness, we write the condition as a system of equations in two free components, grouped according to the degree in $S^*$.  Figure ~\ref{fig:obstruction} illustrates this system in a diagram.

\begin{prop}\label{H iff alpha and beta}
In the setup of Theorem \ref{exact in truncated}, with $d$ decomposed as in Lemma \ref{d decomposition}, $(P,I)$ comes from a generalized complex structure if and only if there exist real $3$-forms
$$\alpha \in \Omega^{1,1;1} \;\textnormal{and}\; \beta \in \Omega^{2,1;0}+\Omega^{1,2;0}$$
such that the following hold:
$$\begin{array}{rcll}
i(\del\bar\del-\bar\del\del)\omega &=& d_S\alpha \quad& (A) : (2;2)\\
i\left(\nabla(\theta_- - \theta_+) + \Theta(\bar\del - \del)\right)\omega
&=& \nabla\alpha + d_S\beta \quad& (B) : (3;1)\\
i\Theta(\theta_- - \theta_+)\omega &=& \Theta\alpha + \nabla\beta \quad& (C) : (4;0)
\end{array}$$
\end{prop}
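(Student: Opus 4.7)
The plan is to unpack the abstract condition of Theorem \ref{exact in truncated}, namely $\Im(dd_K\omega) = d\sigma$ for some real $\sigma \in \Omega^{2,1;0}+\Omega^{1,2;0}+\Omega^{1,1;1}$, by decomposing both sides according to the $(j;k)$-bigrading of Section \ref{trigrading}. Writing $\sigma = \alpha + \beta$ with $\alpha \in \Omega^{1,1;1}$ and $\beta \in \Omega^{2,1;0}+\Omega^{1,2;0}$, the three equations $(A),(B),(C)$ should emerge as the three non-trivial bigraded pieces of that single cohomological condition.

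First I would compute $d_K\omega$ explicitly. Since $\omega$ has tridegree $(0,0;2)$ and is leafwise closed, $d\omega = \del\omega + \bar\del\omega + \theta_+\omega + \theta_0\omega + \theta_-\omega$. Projecting onto $\^^\bullet K^*$ kills every term with positive degree in $N^*_{1,0}$, leaving $d_K\omega = \bar\del\omega + \theta_-\omega$. Reality of $\omega$ gives $\overline{d_K\omega} = \del\omega + \theta_+\omega$, whence
\begin{equation*}
2i\,\Im(dd_K\omega) = d\bigl((\bar\del-\del)\omega + (\theta_- - \theta_+)\omega\bigr).
\end{equation*}
Expanding via $d = \nabla + \Theta + d_S$ produces components in bidegrees $(1;3),(2;2),(3;1),(4;0)$; in parallel, $d\sigma = \nabla\alpha + \Theta\alpha + d_S\alpha + \nabla\beta + d_S\beta$ (with $\Theta\beta = 0$ since $\beta$ has no $S^*$ component) contributes only to bidegrees $(2;2),(3;1),(4;0)$.

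The $(1;3)$ piece of $2i\,\Im(dd_K\omega)$ is $d_S(\bar\del-\del)\omega$, which vanishes automatically: by Remark \ref{quadratic relations}, $d_S$ anticommutes with both $\del$ and $\bar\del$, and $d_S\omega=0$, so no constraint is imposed there. The $(3;1)$ and $(4;0)$ pieces match the corresponding parts of $d\sigma$ termwise, producing $(B)$ and $(C)$ directly (up to a universal rescaling of $\alpha,\beta$ by $-2$ to absorb the $1/(2i)$). For the $(2;2)$ piece one must simplify $\nabla(\bar\del-\del)\omega + d_S(\theta_- - \theta_+)\omega$ using relation (AC1) and its conjugate applied to $\omega$: these give $\del^2\omega = -d_S\theta_+\omega$ and $\bar\del^2\omega = -d_S\theta_-\omega$, so $\nabla(\bar\del-\del)\omega = (\del\bar\del-\bar\del\del)\omega + d_S(\theta_+ - \theta_-)\omega$, and the $d_S\theta_\pm\omega$ contributions cancel, leaving $(\del\bar\del - \bar\del\del)\omega = 2i\,d_S\alpha$, which (after the rescaling) is exactly $(A)$.

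The main obstacle is bookkeeping: tracking signs, consistently applying the rescaling of $\alpha$ and $\beta$ across the three equations, and handling the $(2;2)$ simplification via (AC1) correctly. There is no new conceptual content beyond Theorem \ref{exact in truncated} and the decomposition of $d$ in Lemma \ref{d decomposition}; the only mild subtlety is verifying that the spurious $(1;3)$ component vanishes identically, which is necessary for the decomposition to be well-posed.
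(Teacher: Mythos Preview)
Your approach is essentially the paper's: decompose the exactness condition $-2\Im(dd_K\omega)=d(\alpha+\beta)$ from the proof of Theorem \ref{exact in truncated} according to the $(j;k)$-bigrading, and simplify the $(2;2)$ piece via relation \eqref{AC1}. The only cosmetic difference is that the paper identifies $\alpha=H^{1,1;1}$ and $\beta=H^{2,1;0}+H^{1,2;0}$ directly from the proof of Theorem \ref{exact in truncated} (so no rescaling is needed, the factor $i$ coming from $-2\Im(z)=i(z-\bar z)$), whereas you start from $\Im(dd_K\omega)=d\sigma$ and absorb the resulting $-2$ afterward; both are correct.
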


(To the right of each equation we have given an alphabetic label and indicated its degree.)

\begin{proof}
Let $\alpha = H^{1,1;1}$ and $\beta = H^{2,1;0}+H^{1,2;0}$.  From the proof of Theorem \ref{exact in truncated}, we have the condition
$$-2\Im(dd_K\omega) = d(H^{2,1;0} + H^{1,2;0} + H^{1,1;1}) = d(\beta + \alpha).$$
If we write down the nontrivial degrees of this condition, i.e., those where $\alpha$ or $\beta$ occurs, and then make substitutions for $d_K\omega$ according to Proposition \ref{H constraint lemma}, we get equations (A), (B) and (C).
\end{proof}

Throughout this paper, we will refer to these equations as ``conditions (A), (B) and (C).''

\begin{defn}\label{obstructions}
We define the \emph{obstruction form} $\Phi(\omega)=-2\Im(dd_K\omega)$, and its components
\begin{align*}
\Phi^A(\omega) &= i(\del\bar\del-\bar\del\del)\omega \\
\Phi^B(\omega) &= i\left(\nabla(\theta_- - \theta_+) + \Theta(\bar\del - \del)\right)\omega \\
\Phi^C(\omega) &= i\Theta(\theta_- - \theta_+)\omega
\end{align*}
\end{defn}
Then, as remarked, the vanishing of $\Phi(\omega)$ in the truncated cohomology corresponds to the existence of an appropriate generalized complex structure.

\begin{figure}
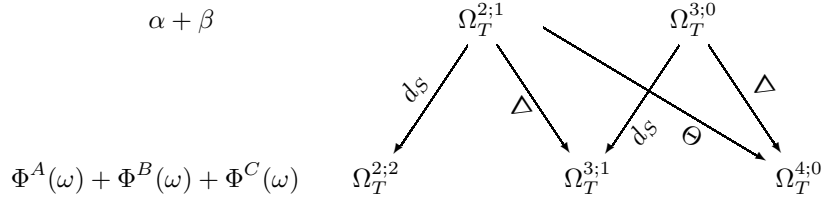

\begin{diagram}[width=2em]
\alpha + \beta  & & & \Omega_T^{2;1} & & & & \Omega_T^{3;0} & & \\
& & \ldTo^{d_S} & & \rdTo<\nabla & \rdTo(5,2)_{{}\qquad\qquad\Theta} & \ldTo_{d_S\qquad{}} & & \rdTo^\nabla & \\
\Phi^A(\omega)+\Phi^B(\omega)+\Phi^C(\omega) \qquad & \Omega_T^{2;2} & & & & \Omega_T^{3;1} & & & & \Omega_T^{4;0}
\end{diagram}
\caption{A diagram of part of the truncated complex. The obstruction form, $\Phi(\omega)$, lives in the lower row, and for the data to be generalized complex it must have a primitive in the upper row.}
\label{fig:obstruction}
\end{figure}

\begin{rem}\label{type 1}
If the type---that is, the complex dimension of $N_{1,0}$---is 1, then conditions (B) and (C) are trivial.  Condition (A) has a simple interpretation, as we shall see.
\end{rem}

\section{The pluriharmonic condition}\label{the pluriharmonic condition}

Condition (A) is just a condition on the de Rham cohomology class of $\omega$ on the leaves.  To make this clear, we turn our attention to the case of a smooth symplectic family $X \to B$, where the fibrewise cohomology can be understood as a vector bundle over $B$.

\subsection{The Gauss-Manin connection}\label{the GM connection}

As we said, the operator $\nabla = \del + \bar\del$ is just the covariant derivative of a chosen bundle connection, and $\Theta = \theta_+ + \theta_0 + \theta_-$ is its curvature.  Similarly to Remark \ref{quadratic relations}, we have the following quadratic relation:
$$\nabla^2 = -d_S\Theta - \Theta d_S$$
So $\nabla^2$ vanishes up to $d_S$-chain-homotopy.  In other words, $\nabla$ determines a \emph{flat} connection on the fibrewise cohomology bundle.  This is known as the \emph{Gauss-Manin connection}, and it is canonical---that is, it doesn't depend on the choice of $N$.  We recall a connection-independent definition, just to make this fact clear.  For details, and a somewhat more general presentation, see \cite{KatzOda}. 

\begin{rem}
We describe the real case first, and then remark on how it complexifies.
\end{rem}

\subsection*{Relative cohomology}
Given a choice of horizontal distribution $N$, we had a bi- or trigrading on forms.  Absent such a choice, we can still define an associated graded object, which will be (non-canonically) equivalent to the graded forms.

On a fibre bundle $\pi:X \to B$, there is a natural filtration of the forms,
$$\Omega^\bullet(X) = F_0^\bullet \supset F_1^\bullet \supset F_2^\bullet \supset \ldots,$$
where $F_n^m$ consists of the $m$-forms on $X$ generated (over $\Omega^\bullet(X)$) by pullbacks of $n$-forms on $B$.  Let $\Lambda^{n;k}(X) = F_n^{n+k}/F_{n+1}^{n+k}$ be the associated graded object.

Let $S \subset TX$ be the vertical distribution; by $\Omega^k(S)$ we mean the sections of $\^ ^k S^*$.  Then there is a canonical isomorphism,
$$\Lambda^{n;k}(X) = \pi^*\Omega^n(B) \tens \Omega^k(S).$$
Each $\Lambda^{n;\bullet}(X)$ is a differential complex, for the fibrewise differential $d_S = 1 \tens d$, and thus we have a cohomology,
$$H^{n;k}_{d_S}(\Lambda) := \frac{\{\sigma \in \Lambda^{n;k}(X) \;|\; d_S \sigma = 0\}}{d_S\Lambda^{n;k-1}(X)}.$$

\begin{prop}\label{d_S fibre decomposition}
$H^{n;k}_{d_S}(X)$ is naturally isomorphic to the sections of a finite-dimensional vector bundle over $B$, whose fibre over $x \in B$ is
$$\left(\^ ^n T_x^*B\right) \tens H^k_{dR}\left(\pi^{-1}(x)\right).$$
\end{prop}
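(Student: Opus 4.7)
The plan is to construct the isomorphism locally over a trivialization of $\pi$, and then glue via naturality. Choose a trivializing cover $\{U_\alpha\}$ of $B$ so that $\pi^{-1}(U_\alpha) \cong U_\alpha \times F$ for a fixed model fiber $F$. Under this identification, the canonical description $\Lambda^{n;k} \cong \pi^*\Omega^n(B) \otimes \Omega^k(S)$, restricted over $U_\alpha$, becomes $\Omega^n(U_\alpha) \otimes_{C^\infty(U_\alpha)} C^\infty\!\bigl(U_\alpha, \Omega^k(F)\bigr)$ — that is, $n$-forms on the base coupled with smoothly $U_\alpha$-parameterized fiberwise $k$-forms. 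Since $d_S$ acts only on the $F$-factor and annihilates $\pi^*\Omega^n(U_\alpha)$, the cohomology decouples as $\Omega^n(U_\alpha)$ tensored over $C^\infty(U_\alpha)$ with the cohomology of the parameterized fiberwise complex.

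The next step is a smooth Poincar\'e lemma with parameters: the cohomology of $\bigl(C^\infty(U_\alpha, \Omega^\bullet(F)),\, d_F\bigr)$ is canonically $C^\infty(U_\alpha, H^\bullet_{dR}(F))$. One proves this by choosing a good cover of $F$ and iterating the usual Poincar\'e homotopy on each chart, observing that the chain-homotopy operators preserve smooth dependence on the $U_\alpha$-parameter, and then gluing via a partition of unity on $F$. Assuming $H^k_{dR}(F)$ is finite-dimensional (automatic, for example, for compact $F$), a choice of basis of closed representatives identifies $C^\infty(U_\alpha, H^k_{dR}(F))$ with the sections of the trivial bundle $U_\alpha \times H^k_{dR}(F)$. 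Combining these steps yields $H^{n;k}_{d_S}(\pi^{-1}(U_\alpha)) \cong \Gamma\bigl(U_\alpha,\, \wedge^n T^*B \otimes \mathcal{H}^k|_{U_\alpha}\bigr)$, which is the desired formula locally.

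To globalize, observe that the local trivial bundles $U_\alpha \times H^k_{dR}(F)$ patch into a single vector bundle $\mathcal{H}^k \to B$: Ehresmann's theorem supplies the transition diffeomorphisms of the fiber bundle, which, by functoriality of de Rham cohomology, induce the transition maps of $\mathcal{H}^k$ (this is essentially the topological Gauss-Manin bundle). The local isomorphisms constructed above are natural under such pullbacks — they come from the standard product identification for the cohomology of $U_\alpha \times F$ — hence agree on overlaps and assemble into a global isomorphism $H^{n;k}_{d_S}(X) \cong \Gamma(\wedge^n T^*B \otimes \mathcal{H}^k)$.

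The main obstacle will be the smooth Poincar\'e lemma with parameters: showing that every pointwise-exact smooth family $\alpha_x = d_F \beta_x$ admits a primitive $\beta_x$ depending smoothly on $x \in U_\alpha$, and dually that every smooth section of $\mathcal{H}^k$ is represented by a smooth closed form on $U_\alpha \times F$. This is a standard but fiddly result; one can either make the Poincar\'e chain-homotopies on contractible charts smooth in the parameter and combine them via a partition of unity on $F$, or equivalently run a \v{C}ech-type double-complex argument on a good cover of $F$ with smooth coefficients in $U_\alpha$. Once this technical input is secured, the local computation and the gluing step are both routine by naturality.
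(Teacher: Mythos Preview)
Your proposal is correct and is in fact more detailed than what the paper provides: the paper explicitly omits the proof, only describing the correspondence (a section of the cohomology bundle is represented by a smoothly chosen family of closed forms; conversely a $d_S$-class restricts to a fibrewise de Rham class over each point). Your local-trivialization-plus-gluing argument is the standard way to make this rigorous, and you correctly identify the crux---the smooth Poincar\'e lemma with parameters---which is exactly what underlies the paper's unproven phrase ``These may be chosen smoothly.''
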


We give the correspondence, with proof omitted: a section over $B$ of $(\^ ^n T^*B) \tens H^k_{dR}(S)$ has representatives in each fibre.  These may be chosen smoothly, giving an element of $\Lambda^{n,k}(X)$.  Conversely, given $[\rho]_{d_S} \in H^{n;k}_{d_S}(X)$, let $\rho \in \Lambda^{n;k}(X)$ be a representative, and produce a section of $(\^ ^n T^*B) \tens H^k_{dR}(S)$ by taking, in each fibre, the $d_S$-cohomology of $\rho$.

\begin{rem}
Since $\Lambda^{n+1;k-1}(X) = F^{n+k}_{n+1}/F^{n+k}_{n+2}$ and $\Lambda^{n;k}(X) = F^{n+k}_n/F^{n+k}_{n+1}$, we get a short exact sequence of complexes,
$$0 \to \Lambda^{n+1;k-1}(X) \to F^{n+k}_n/F^{n+k}_{n+2} \to \Lambda^{n;k}(X) \to 0,$$
giving rise to a long exact sequence in cohomology.
\end{rem}

\begin{defn}
The \emph{Gauss-Manin connection},
$$\nabla : H^{n;k}_{d_S}(X) \to H^{n+1;k}_{d_S}(X),$$
is the connecting homomorphism arising from the short exact sequence
$$0 \to \Lambda^{n+1;k-1}(X) \to F^{n+k}_n/F^{n+k}_{n+2} \to \Lambda^{n;k}(X) \to 0.$$
\end{defn}

\begin{prop}
The Gauss-Manin connection is flat, i.e., $\nabla^2=0$;
\end{prop}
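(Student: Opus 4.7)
The plan is to prove that $\nabla^2 = 0$ on $H^{n;k}_{d_S}(X)$ by identifying the abstractly defined Gauss--Manin connection with the concrete degree-$(1;0)$ operator $\nabla$ that arises, as in Lemma \ref{d decomposition}, from a choice of horizontal distribution. Flatness then follows immediately from the quadratic relation $\nabla^2 + d_S\Theta + \Theta d_S = 0$ observed at the start of this subsection, which is itself the degree-$(2;0)$ component of $d^2 = 0$.

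First I would fix a complementary distribution $N \subset TX$ to the vertical $S$, yielding the trigrading and decomposition $d = \nabla + \Theta + d_S$. This splits the filtration $F^\bullet_\bullet$, identifying $\Lambda^{n;k}(X)$ with $\Omega^{n;k}(X)$ and the fibrewise differential with the $d_S$-component of $d$. The next step is to verify that the connecting homomorphism of
\begin{equation*}
0 \to \Lambda^{n+1;k-1}(X) \to F^{n+k}_n/F^{n+k}_{n+2} \to \Lambda^{n;k}(X) \to 0
\end{equation*}
coincides with $\nabla$ on $H^{n;k}_{d_S}$. This is a short diagram chase: given $\rho \in \Omega^{n;k}(X)$ with $d_S\rho = 0$, the splitting lifts $\rho$ to itself in $F^{n+k}_n/F^{n+k}_{n+2}$, and $d\rho = \nabla\rho + \Theta\rho$, where $\Theta\rho \in \Omega^{n+2;k-1}$ lies in $F^{n+k+1}_{n+2}$ and so represents zero in $\Lambda^{n+1;k}(X)$. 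The image under the connecting homomorphism is therefore $[\nabla\rho]$.

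With this identification in place, applying $\nabla^2 = -(d_S\Theta + \Theta d_S)$ to a $d_S$-closed form $\rho$ yields $\nabla^2\rho = -d_S(\Theta\rho)$, which is $d_S$-exact. Hence $\nabla^2$ vanishes on $H^{n;k}_{d_S}(X)$, proving flatness. As a bonus, this shows that the induced operator on $d_S$-cohomology is independent of the choice of $N$, since the connecting-homomorphism definition involves none. The main step requiring care is the identification; after that, everything reduces to the single algebraic identity from $d^2 = 0$.
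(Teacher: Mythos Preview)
Your proposal is correct and follows exactly the approach the paper uses: the paper establishes flatness via the identity $\nabla^2 = -d_S\Theta - \Theta d_S$ (stated just before the formal definition), then remarks afterward that the concrete $\nabla$ arising from a choice of $N$ agrees with the abstractly defined connecting homomorphism. You have simply made this implicit argument explicit and reordered it (identify first, then apply the identity), with the diagram chase for the identification spelled out correctly.
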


\subsection*{Computing the connection}

If we have chosen a horizontal distribution $N \subset TX$ complementary to $S$, then as in Lemma \ref{d decomposition}, we get a decomposition $d = \nabla + \Theta + d_S$.  Furthermore, we have $\Lambda^{n;k}(X) \iso \Omega^{n;k}(X)$.  Under this isomorphism, $d_S$ as defined in this section agrees with $d_S$ as defined in Section \ref{d decomposition}.  As remarked previously, the $\nabla$ of Lemma \ref{d decomposition} passes to a differential on $d_S$-cohomology, and agrees with the Gauss-Manin connection. 

We may complexify the above story, giving a decomposition
$$\C \tens H^{n;k}_{d_S}(X) = \bigoplus_{i+j=n} H^{i,j;k}_{d_S}(X),$$
where $i$ and $j$ are the holomorphic and anti-holomorphic degrees on $B$.  Passing to $d_S$-cohomology, $\del$ and $\bar\del$ give the holomorphic and anti-holomorphic parts of $\nabla$.

\begin{rem}
From equation \eqref{AC2} we see that, in $d_S$-cohomology, we have the anticommutation relation
\begin{equation}
\del\bar\del + \bar\del\del = 0 \label{AC5}
\end{equation}
\end{rem}

\subsection{Conditions on symplectic families}

We now give necessary (and in some cases sufficient) conditions for smooth symplectic families over complex bases to be generalized complex.  If the fibres or the base are 2-real-dimensional, we can usually give stronger statements.  In this section, we look only at condition (A) in Proposition \ref{H iff alpha and beta}.

\begin{thm}\label{[omega] pluriharmonic}
Let $\pi : X \to B$ be a smooth symplectic family over a complex manifold, with leafwise symplectic form $\omega$.

If these data are generalized complex, then
\begin{equation*}\label{dH4 cohomology}
\del\bar\del[\omega]_{d_S} = 0.
\end{equation*}
\end{thm}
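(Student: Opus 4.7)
The plan is to combine condition (A) of Proposition \ref{H iff alpha and beta} with the quadratic relation \eqref{AC2} of Remark \ref{quadratic relations}, passing both to $d_S$-cohomology.

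First I would fix a horizontal distribution $N \subset TX$ and extend $\omega$ as in Definition \ref{definition from N}, so that $\omega$ has pure degree $(0;2)$ and $d_S\omega = 0$ (this is just the leafwise-closedness). Hence $[\omega]_{d_S}$ is a well-defined class in $H^{0;2}_{d_S}(X)$, which by Proposition \ref{d_S fibre decomposition} is a section of the fibrewise cohomology bundle. Next I would observe that the degree $(1;1)$ component of $d^2 = 0$ gives $\nabla d_S + d_S \nabla = 0$; complexifying, $\partial$ and $\bar\partial$ anticommute with $d_S$, so they descend to operators on $H^{\bullet;\bullet}_{d_S}(X)$ (these are the Gauss--Manin connection operators).

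Now suppose $(P,I)$ comes from a generalized complex structure. By Proposition \ref{H iff alpha and beta}, there is a real form $\alpha \in \Omega^{1,1;1}$ with
$$i(\partial\bar\partial - \bar\partial\partial)\omega = d_S\alpha.$$
Thus $(\partial\bar\partial - \bar\partial\partial)\omega$ is $d_S$-exact, so $(\partial\bar\partial - \bar\partial\partial)[\omega]_{d_S} = 0$ in $H^{1,1;2}_{d_S}(X)$. On the other hand, applying \eqref{AC2} to $\omega$ and using $d_S\omega = 0$ yields
$$(\partial\bar\partial + \bar\partial\partial)\omega = -d_S(\theta_0\omega),$$
so that $(\partial\bar\partial + \bar\partial\partial)[\omega]_{d_S} = 0$ as well. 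Adding the two identities in $H^{1,1;2}_{d_S}(X)$ gives $2\partial\bar\partial[\omega]_{d_S} = 0$, which is the required pluriharmonicity.

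There is no genuine obstacle here: condition (A) already provides $d_S$-exactness at the cochain level, so no further spectral-sequence input is needed at this stage. The only subtle point to highlight is that the apparently asymmetric combination $\partial\bar\partial - \bar\partial\partial$ appearing in condition (A) becomes $2\partial\bar\partial$ once one descends to Gauss--Manin cohomology, because the chain-level relation \eqref{AC2} forces $\partial\bar\partial + \bar\partial\partial$ to vanish there. This is what converts the a~priori mysterious equation (A) into the clean pluriharmonicity statement $\partial\bar\partial[\omega]_{d_S} = 0$.
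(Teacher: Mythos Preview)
Your proof is correct and follows essentially the same route as the paper's own argument: condition (A) gives $(\partial\bar\partial - \bar\partial\partial)[\omega]_{d_S} = 0$, and relation \eqref{AC2} gives $(\partial\bar\partial + \bar\partial\partial)[\omega]_{d_S} = 0$, whence $\partial\bar\partial[\omega]_{d_S} = 0$. The paper packages the second step as the derived relation \eqref{AC5}, whereas you apply \eqref{AC2} directly at the cochain level to $\omega$, but the content is identical.
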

In this case, we say that $[\omega]_{d_S}$ is \emph{pluriharmonic}.

\begin{proof}
If these data are generalized complex, then from condition (A) in Proposition \ref{H iff alpha and beta} we know that there exists $\alpha$ such that
$$i(\del\bar\del - \bar\del\del)\omega = d_S\alpha.$$
This is equivalent to the vanishing of $(\del\bar\del - \bar\del\del)\omega$ in $d_S$-cohomology.  Applying the anticommutation relation \eqref{AC5}, the result follows.
\end{proof}

In fact, since $\del\bar\del[\omega]_{d_S}=0$ is equivalent to the existence of a form solving condition (A), in certain circumstances it is sufficient for the existence of a generalized complex structure.

\begin{cor}\label{over Riemann surface}
If in the above setting $B$ is a Riemann surface, then $(P,I)$ are generalized complex if and only if $\del\bar\del[\omega]_{d_S} = 0$.
\end{cor}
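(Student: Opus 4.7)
The forward direction is Theorem \ref{[omega] pluriharmonic}, so the plan is to prove the converse. My approach will be to apply Proposition \ref{H iff alpha and beta} directly, constructing real forms $\alpha \in \Omega^{1,1;1}$ and $\beta \in \Omega^{2,1;0}+\Omega^{1,2;0}$ solving conditions $(A)$, $(B)$, $(C)$. The key observation is that $\dim_\C B = 1$ forces $\wedge^2 N^*_{1,0}=0$ and $\wedge^2 N^*_{0,1}=0$, so $\Omega^{i,j;k}=0$ whenever $i \geq 2$ or $j \geq 2$. In particular $\beta$ lies in a trivial space, so I would set $\beta = 0$.

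A short degree check will reduce conditions $(B)$ and $(C)$ to $0=0$ for any $\alpha \in \Omega^{1,1;1}$. On each left-hand side, $\theta_+\omega$ and $\theta_-\omega$ lie in $\Omega^{2,0;1}$ and $\Omega^{0,2;1}$ respectively, while $\Theta\del\omega$ and $\Theta\bar\del\omega$ land in spaces with $i \geq 2$ or $j \geq 2$; since $\theta_\pm\omega = 0$, the terms $\Theta(\theta_\pm\omega)$ vanish too. On each right-hand side, $\nabla\alpha \in \Omega^{2,1;1}\oplus\Omega^{1,2;1}$ and $\Theta\alpha \in \Omega^{3,1;0}\oplus\Omega^{2,2;0}\oplus\Omega^{1,3;0}$. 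All of these spaces vanish. Hence $(B)$ and $(C)$ impose no constraint on $\alpha$.

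That leaves condition $(A)$: I need to find a real $\alpha \in \Omega^{1,1;1}$ with $d_S\alpha = i(\del\bar\del - \bar\del\del)\omega$. The anticommutation relation \eqref{AC5} gives $\del\bar\del + \bar\del\del = 0$ on $d_S$-cohomology, so the hypothesis $\del\bar\del[\omega]_{d_S}=0$ is equivalent to $(\del\bar\del - \bar\del\del)[\omega]_{d_S}=0$ in $H^{1,1;2}_{d_S}$. Since $(\del\bar\del - \bar\del\del)\omega$ is automatically $d_S$-closed (as $d^2 = 0$ forces $d_S\del = -\del d_S$ and similarly for $\bar\del$), this vanishing produces a complex $\tilde\alpha \in \Omega^{1,1;1}$ with $d_S\tilde\alpha = i(\del\bar\del-\bar\del\del)\omega$. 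Since $\omega$ is real and $\del$, $\bar\del$ are complex conjugates, the right-hand side is real (it equals $i$ times a purely imaginary form), so taking $\alpha := \Re(\tilde\alpha)$ (which stays in $\Omega^{1,1;1}$ since this space is stable under conjugation) gives the desired real $\alpha$. I expect the main obstacle to be just the degree bookkeeping for $(B)$ and $(C)$; the cohomological step and the reality argument are both routine once \eqref{AC5} is invoked.
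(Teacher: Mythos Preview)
Your proposal is correct and follows essentially the same approach as the paper: the forward direction is Theorem \ref{[omega] pluriharmonic}, and for the converse the paper simply cites Remark \ref{type 1} (that conditions (B) and (C) are vacuous when the complex type is $1$) together with the observation immediately preceding the corollary (that $\del\bar\del[\omega]_{d_S}=0$ is equivalent to solvability of (A)). Your write-up just unpacks these two citations explicitly---the degree count for (B) and (C), and the use of \eqref{AC5} plus the reality argument for (A)---which is exactly what lies behind the paper's two-sentence proof.
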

\begin{proof}
Since the number of complex dimensions is 1, as per Remark \ref{type 1}, conditions (B) and (C) in Proposition \ref{H iff alpha and beta} are trivial.  Then the result follows from the above remark.
\end{proof}

If the smooth symplectic family has local symplectic trivializations, then $\nabla[\omega]_{d_S}=0$, which implies $\del\bar\del[\omega]_{d_S}=0$.  So we can say, for example,
\begin{cor}
A symplectic fibre bundle over a Riemann surface is generalized complex.
\end{cor}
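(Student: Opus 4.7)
The plan is to reduce immediately to Corollary \ref{over Riemann surface}, which says that over a Riemann surface the sole obstruction is the vanishing of $\del\bar\del[\omega]_{d_S}$ in the fibrewise cohomology. So the entire task is to verify $\del\bar\del[\omega]_{d_S}=0$ for a symplectic fibre bundle.

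The first step is to use the definition: a symplectic fibre bundle $\pi:X\to B$ admits, around every $b\in B$, an open neighbourhood $U$ and a fibrewise-symplectic trivialization $\pi^{-1}(U)\iso U\times(F,\omega_F)$. In such a chart, take $N$ to be the product (flat) horizontal distribution; then the extended 2-form $\omega$ is literally the pullback of $\omega_F$ from the fibre factor, so the decomposition $d=\nabla+\Theta+d_S$ from Lemma \ref{d decomposition} gives $\nabla\omega=0$ on the nose (and $\Theta=0$) over $U$. In particular $\nabla[\omega]_{d_S}=0$ as a section of the Gauss--Manin bundle over $U$.

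The second step is the glueing: the differential induced on $d_S$-cohomology by $\nabla$ is, by the discussion in Section \ref{the GM connection}, the canonical Gauss--Manin connection, which is independent of the choice of complementary distribution $N$. Hence the vanishing of $\nabla[\omega]_{d_S}$ on each trivializing chart is an honest statement about the global flat connection, and these local vanishings patch to give $\nabla[\omega]_{d_S}=0$ on all of $B$. Decomposing $\nabla=\del+\bar\del$ we conclude $\del[\omega]_{d_S}=0$, and so $\del\bar\del[\omega]_{d_S}=0$ trivially. Corollary \ref{over Riemann surface} then finishes the proof.

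There is essentially no obstacle here: the result is one step beyond the remark preceding Corollary \ref{over Riemann surface}, and the only point requiring a moment's care is that the local symplectic trivializations need not assemble into a global choice of $N$. This is handled precisely by invoking the $N$-independence of the Gauss--Manin connection, so that the cohomology-level conclusion $\nabla[\omega]_{d_S}=0$ is unambiguous.
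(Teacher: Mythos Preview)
Your argument is correct and is exactly the paper's approach: the sentence immediately preceding the corollary states that local symplectic trivializations give $\nabla[\omega]_{d_S}=0$, hence $\del\bar\del[\omega]_{d_S}=0$, and then Corollary~\ref{over Riemann surface} applies. Your additional care about gluing via the $N$-independence of the Gauss--Manin connection is a welcome elaboration of what the paper leaves implicit.
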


Or we can generate counterexamples:
\begin{cor}\label{counterexamples 1}
Let $X \to B$ be a symplectic fibre bundle with compact fibres over a complex manifold $B$, with Poisson structure $P$ and transverse complex structure $I$..

If $V : B \to \R$ is a smooth, positive, function on the base, then the quotient $P/V$ is a Poisson structure on $X$.  But if $V$ is \emph{not} pluriharmonic then $(P/V,I)$ are not generalized complex.
\end{cor}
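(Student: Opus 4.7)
The plan is to reduce the claim to the pluriharmonic necessary condition from Theorem \ref{[omega] pluriharmonic} via a short Leibniz computation. First I would verify that $P/V$ is genuinely a Poisson structure: since $V$ is pulled back from the base $B$, its differential $dV$ annihilates the vertical distribution $S = TF$, so $V$ is a Casimir function for $P$. Scaling a Poisson bivector by a nowhere-vanishing Casimir preserves the Jacobi identity, hence $P/V$ is Poisson with the same symplectic foliation. Dually, the corresponding leafwise symplectic form is $V\omega$.

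Next, I would apply Theorem \ref{[omega] pluriharmonic} to the rescaled data. The conclusion $(P/V,I)$ generalized complex would force $\del\bar\del[V\omega]_{d_S}=0$, regarded as a section of the fibrewise cohomology bundle $(\^ ^{1,1}T^*B) \tens H^2_{dR}(\textnormal{fibre})$ over $B$ through Proposition \ref{d_S fibre decomposition}. The key observation is that, because $X \to B$ is a genuine \emph{symplectic} fibre bundle, the class $[\omega]_{d_S}$ is covariantly constant for the Gauss-Manin connection, so in particular $\del[\omega]_{d_S} = \bar\del[\omega]_{d_S}=0$. Combined with the Leibniz rule and the fact that $V$ is pulled back from $B$, this yields
\[
\del\bar\del[V\omega]_{d_S} = (\del\bar\del V)\cdot[\omega]_{d_S}
\]
in the fibrewise cohomology.

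To finish, I would note that the right-hand side is nonzero wherever $\del\bar\del V$ is. Since each fibre is compact symplectic of positive dimension, its symplectic class $[\omega]_x \in H^2_{dR}(\pi^{-1}(x))$ is nowhere-vanishing (for instance, its top power computes the fibre's symplectic volume), so the pointwise tensor $(\del\bar\del V)(x)\tens[\omega]_x$ vanishes if and only if $(\del\bar\del V)(x)=0$. If $V$ is not pluriharmonic there is some $x\in B$ with $(\del\bar\del V)(x)\neq 0$; at that point the necessary condition of Theorem \ref{[omega] pluriharmonic} fails for $V\omega$, so $(P/V,I)$ cannot be generalized complex.

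The only genuine subtlety is justifying the Leibniz rule at the level of $d_S$-cohomology classes, but this is essentially automatic from the bundle-of-cohomology picture in Proposition \ref{d_S fibre decomposition}: $V$ acts as a scalar function on the base and $\del$, $\bar\del$ differentiate the product of that function with the covariantly-constant section $[\omega]_{d_S}$ in the usual way. No other step involves more than bookkeeping.
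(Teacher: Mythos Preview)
Your argument is correct and follows the same route as the paper: both compute $\del\bar\del[V\omega]_{d_S} = (\del\bar\del V)\,[\omega]_{d_S}$ via the Leibniz rule and flatness of $[\omega]_{d_S}$, then invoke Theorem~\ref{[omega] pluriharmonic}. You simply supply more justification than the paper does (that $P/V$ is Poisson, that $[\omega]_{d_S}$ is flat for a symplectic bundle, and that $[\omega]_x\neq 0$ by compactness), all of which is sound.
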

\begin{proof}
Let $\omega$ be the fibrewise symplectic form for $P$.  Then the fibrewise symplectic form for $P/V$ is $V\omega$.  So,
$$ \del\bar\del[V\omega]_{d_S} = (\del\bar\del V)[\omega]_{d_S} \neq 0$$
\end{proof}

If a smooth symplectic family $X \to B$ has 2-dimensional fibres, then $[\omega]_{d_S}$ just measures the symplectic volume of each fibre.  Thus it can be identified with a positive real function on the base.  But pluriharmonic \emph{functions} are well-understood, and the pluriharmonicity condition is quite strong:
\begin{thm}\label{V pluriharmonic}
Let $\pi:X \to B$ be a smooth symplectic family with compact, 2-dimensional fibres over a compact, connected complex manifold $B$.

If these data are generalized complex, then the function $V : B \to \R$ giving the symplectic volume of each fibre must be constant---in fact, $X \to B$ is a \emph{symplectic fibre bundle}.
\end{thm}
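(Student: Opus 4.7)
The plan is to reduce the theorem to a statement about pluriharmonic functions, then use Moser's trick. I would proceed in three steps.

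First, I would apply Theorem \ref{[omega] pluriharmonic} to conclude $\del\bar\del[\omega]_{d_S} = 0$ in the $d_S$-cohomology bundle. Since the fibres are compact oriented surfaces (oriented by the symplectic form), each fibre has $H^2_{dR}(\pi^{-1}(b)) \cong \R$, with the isomorphism \emph{canonically} given by integration against the fundamental class. By Proposition \ref{d_S fibre decomposition}, the bundle $H^{0;2}_{d_S}(X)$ is then canonically trivialized as $\R \times B$, and under this trivialization $[\omega]_{d_S}$ corresponds to the symplectic-volume function $V(b) = \int_{\pi^{-1}(b)} \omega$. The key observation is that this canonical trivialization is \emph{parallel} for the Gauss-Manin connection: the fundamental class of the fibre is flat, as it is preserved by parallel transport (this is essentially the fact that Gauss-Manin is induced by the topological monodromy, which preserves orientations). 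Consequently $\nabla$ becomes the ordinary exterior derivative on $B$, and similarly $\del, \bar\del$ become the usual Dolbeault operators on $B$, so the pluriharmonicity of $[\omega]_{d_S}$ becomes the honest statement $\del\bar\del V = 0$.

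Second, I would invoke the maximum principle. A smooth function $V$ on a complex manifold with $\del\bar\del V = 0$ is, locally, the real part of a holomorphic function. On the compact connected complex manifold $B$, $V$ attains its maximum, and by the maximum principle applied in a chart to a holomorphic function whose real part is $V$, the function $V$ is locally constant; by connectedness it is globally constant. This gives the first conclusion: all fibres have the same symplectic volume.

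Third, I would upgrade from constant volume to a \emph{symplectic} trivialization via a parametric Moser argument. Given any $b_0 \in B$, choose a local trivialization of the underlying smooth fibre bundle $\pi^{-1}(U) \cong U \times F$ over a small chart $U \ni b_0$; this identifies the leafwise symplectic form with a smooth family $\{\omega_b\}_{b \in U}$ of symplectic forms on the fixed surface $F$. Since $V$ is constant, $\int_F \omega_b = \int_F \omega_{b_0}$ for all $b \in U$, so $\omega_b - \omega_{b_0}$ is exact and can be written as $d\sigma_b$ with $\sigma_b$ depending smoothly on $b$ (using, e.g., the Hodge decomposition with respect to a smoothly varying metric, or a contracting homotopy). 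Moser's argument then produces a smooth family of diffeomorphisms $\phi_b : F \to F$, with $\phi_{b_0} = \Id$, such that $\phi_b^* \omega_b = \omega_{b_0}$; this gives a local symplectic trivialization near $b_0$, proving that $X \to B$ is a symplectic fibre bundle.

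The main obstacle, or rather the step that requires the most care, is the first one: justifying that the abstract condition $\del\bar\del[\omega]_{d_S} = 0$ in the Gauss-Manin cohomology bundle really does translate to $\del\bar\del V = 0$ for the volume function. Everything hinges on the flatness of the fundamental-class trivialization, which in turn relies on the connection-independent description of the Gauss-Manin connection in Section \ref{the GM connection}. Once that identification is in place, the remaining steps are standard.
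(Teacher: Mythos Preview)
Your proposal is correct and follows essentially the same route as the paper: identify $[\omega]_{d_S}$ with the volume function $V$ (the paper does this informally in the paragraph preceding the theorem), apply the maximum principle to conclude $V$ is constant, and then run a fibrewise Moser argument (the paper packages this as Lemma~\ref{symplectic bundle}). If anything, you are more explicit than the paper about why the fibre-integration trivialization of $H^{0;2}_{d_S}$ is Gauss--Manin--flat, which is the point the paper leaves implicit.
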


\begin{proof}
The claim that the symplectic volume is constant just follows from an application of the maximum principle for pluriharmonic functions on compact connected manifolds.  The second part of the claim, that $X \to B$ has local symplectic trivializations, is an application of Lemma \ref{symplectic bundle}, as follows.
\end{proof}

\begin{lem}\label{symplectic bundle}
Let $\pi:X \to B$ be a compact connected smooth symplectic family, with fibrewise symplectic form $\omega$.

If $[\omega]_{d_S}$ is flat under the Gauss-Manin connection, then $\pi:X \to B$ is a \emph{symplectic} fibre bundle for the symplectic form $\omega$. 
\end{lem}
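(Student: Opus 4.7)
The plan is to deploy a parametrized version of Moser's stability argument. Since $\pi:X \to B$ is, by hypothesis, a smooth fibre bundle, around each point $b_0 \in B$ we may choose a contractible open neighbourhood $U \ni b_0$ together with a smooth trivialization $\phi : U \times F \isoto \pi^{-1}(U)$, where $F := \pi^{-1}(b_0)$. Restricting the leafwise symplectic form to each fibre and pulling back by $\phi$ yields a smooth family $\omega_u := \phi^*\omega|_{\{u\}\times F}$ of symplectic forms on the compact manifold $F$. Our goal is to modify $\phi$ so that it sends a fixed form on $F$ to $\omega_u$ on each slice.

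The next step is to use the flatness hypothesis to conclude that $[\omega_u] \in H^2_{dR}(F)$ is independent of $u \in U$. The smooth trivialization $\phi$ identifies the restriction to $U$ of the fibrewise cohomology bundle with the constant bundle $U \times H^\bullet(F)$; under this identification the Gauss-Manin connection becomes the trivial flat connection, because $\nabla$ is, by construction, the flat connection associated with the local system of fibrewise cohomology of $\pi$. The hypothesis that $[\omega]_{d_S}$ is $\nabla$-parallel then says that $u \mapsto [\omega_u]$ is locally constant on the contractible set $U$, so $[\omega_u] = [\omega_{b_0}]$ for every $u \in U$.

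Now Moser's argument runs in families. After possibly shrinking $U$, the segment $\omega_{t,u} := \omega_{b_0} + t(\omega_u - \omega_{b_0})$ consists of symplectic forms for all $(t,u) \in [0,1] \times U$, all representing the cohomology class $[\omega_{b_0}]$. Fix an auxiliary Riemannian metric on $F$ and set $\beta_u := d^*G(\omega_u - \omega_{b_0})$, where $G$ is the associated Green's operator; since $\omega_u - \omega_{b_0}$ is exact (hence has trivial harmonic part), $\beta_u$ is a smoothly $u$-dependent family of 1-forms with $d\beta_u = \omega_u - \omega_{b_0}$. Define vector fields $X_{t,u}$ on $F$ by $\iota_{X_{t,u}}\omega_{t,u} = -\beta_u$, and let $\psi_{t,u}$ be the time-dependent flow starting at the identity at $t=0$. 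The usual Moser calculation gives $\psi_{1,u}^*\omega_u = \omega_{b_0}$, smoothly in $u$.

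Finally, assemble these diffeomorphisms into the modified trivialization $\tilde\phi(u,y) := \phi(u,\psi_{1,u}^{-1}(y))$; by construction $\tilde\phi^*\omega|_{\{u\}\times F} = \omega_{b_0}$ for every $u \in U$, so $\tilde\phi$ is a symplectic local trivialization. Performing this construction around every point of $B$ exhibits $(\pi,\omega)$ as a symplectic fibre bundle. The only subtle point is the smooth dependence of the Moser primitive $\beta_u$ on the parameter $u$: a chain-level primitive need not vary smoothly, which is why the argument must rely on Hodge theory (or an equivalent smoothing procedure).
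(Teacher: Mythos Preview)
Your argument is correct and is essentially the same as the paper's: both run Moser's trick fibrewise, using the flatness of $[\omega]_{d_S}$ to obtain a fixed cohomology class, Hodge theory to choose a smoothly varying primitive, and a shrinking of the base neighbourhood to guarantee nondegeneracy of the interpolation. The only differences are cosmetic---you are a bit more explicit about why the Gauss-Manin connection becomes trivial in a smooth local trivialization and about the Green's operator producing the smooth primitive, whereas the paper simply asserts these points.
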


\begin{proof}
Results like this are standard in the literature (see \cite{McDuffSalamon} for similar examples).  In this case, we only sketch the argument:

Given a smooth local trivialization about a point $x \in B$, one has a fibrewise symplectic form $\omega$ which may not be be \emph{constant} over $B$ in this trivialization, but, since $[\omega]_{d_S}$ is flat over $B$, $\omega$ is fibrewise cohomologous to a constant-over-$B$ form.  One then uses Moser stability to find fibrewise isotopies from the given local trivialization to this new \emph{symplectic} trivialization.  We note that this may be accomplishes smoothly over $B$.
\end{proof}

\begin{example}
We would like to give examples of generalized complex structures on smooth symplectic families which do not admit symplectic trivializations.  The following class of examples, which are noncompact, have nontrivial cases whose fibres are 2-dimensional.  (As per Theorem \ref{V pluriharmonic}, there will be no such examples of \emph{compact} surface bundles.)  Later we will give a compact example of higher rank.

Let $X \to B$ be a \emph{flat} symplectic fibre bundle over a noncompact Riemann surface $B$ with fibrewise symplectic form $\omega$, and suppose that $V : B \to \R$ is a nonconstant, positive pluriharmonic function (for example, a real linear function on the upper half plane $\sr H^+ \subset \C$).

Then $V\omega$ determines a Poisson structure, $P/V$, on $X$ and, as usual, the complex structure on $B$ pulls back to a transverse complex structure $I$ on $X$.  Since $V$ is not constant, the fibrewise symplectic volume is not costant, and thus $P/V$ does not admit symplectic trivializations.

\begin{prop}
In this case, $(P/V,I)$ are generalized complex.
\end{prop}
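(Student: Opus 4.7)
The plan is to apply Corollary \ref{over Riemann surface}: since $B$ is a Riemann surface, the data $(P/V,I)$ are generalized complex if and only if the fibrewise cohomology class of the symplectic form of $P/V$, namely $[V\omega]_{d_S}$, satisfies $\del\bar\del[V\omega]_{d_S}=0$ in the Gauss--Manin bundle. The whole argument thus reduces to a single cohomology calculation.

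First I would use the hypothesis that $X\to B$ is a \emph{flat} symplectic fibre bundle to conclude that the original class $[\omega]_{d_S}$ is Gauss--Manin flat: $\nabla[\omega]_{d_S}=0$, so in particular $\del[\omega]_{d_S}=\bar\del[\omega]_{d_S}=0$. Second, since $V$ is pulled back from $B$, it is $d_S$-closed (constant along the fibres), so $[V\omega]_{d_S}=V\cdot[\omega]_{d_S}$ as a section of the Gauss--Manin bundle, and $\del,\bar\del$ satisfy the usual Leibniz rule on such products because they are induced from $\del,\bar\del$ on $X$. A direct computation then gives
\begin{align*}
\bar\del[V\omega]_{d_S} &= (\bar\del V)\wedge[\omega]_{d_S} + V\cdot\bar\del[\omega]_{d_S} = (\bar\del V)\wedge[\omega]_{d_S},\\
\del\bar\del[V\omega]_{d_S} &= (\del\bar\del V)\wedge[\omega]_{d_S} - (\bar\del V)\wedge\del[\omega]_{d_S},
\end{align*}
and both terms on the last line vanish: the first because $V$ is pluriharmonic ($\del\bar\del V=0$), the second because $[\omega]_{d_S}$ is Gauss--Manin flat. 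Hence $\del\bar\del[V\omega]_{d_S}=0$, and Corollary \ref{over Riemann surface} finishes the proof.

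There is essentially no obstacle once the framework is in place; the content is simply that pluriharmonicity of a function on $B$ combined with Gauss--Manin flatness of a cohomology class yields pluriharmonicity of their product. The only subtlety worth spelling out is that ``$\del\bar\del$'' is a well-defined operator on sections of the Gauss--Manin bundle---which it is, because the Gauss--Manin connection is flat, so $\del$ and $\bar\del$ (its holomorphic/antiholomorphic parts) square to zero and obey the Leibniz rule against pullbacks of functions from $B$.
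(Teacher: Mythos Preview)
Your proof is correct and follows essentially the same approach as the paper: expand $\del\bar\del$ of the product $V\omega$ by the Leibniz rule, kill one term with $\del\bar\del V=0$ and the others with flatness of $\omega$, then invoke Corollary \ref{over Riemann surface}. The only cosmetic difference is that the paper carries out the computation at the level of forms (using the given flat symplectic connection, so that $\del\omega=\bar\del\omega=0$ hold as forms, not just in cohomology), whereas you work directly in $d_S$-cohomology with the Gauss--Manin connection; either way the same four terms appear and vanish for the same reasons.
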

\begin{proof}
The given connection determines a decomposition of $d$ as in Lemma \ref{d decomposition}.  Since the connection is flat, the curvature $\Theta$ vanishes.  Since the connection is symplectic for $\omega$, we have that $\del\omega=\bar\del\omega=0$.  Then
\begin{eqnarray*}
\del\bar\del (V\omega) &=& (\del\bar\del V) \omega + (\del V)(\bar\del\omega) - (\bar\del V)(\del\omega) + V(\del\bar\del\omega) \\
&=& 0 + 0 - 0 + 0
\end{eqnarray*}
Then the claim follows from Corollary \ref{over Riemann surface}.
\end{proof}
\end{example}

\subsection{Higher-rank smooth symplectic families}

We now consider smooth symplectic families whose fibres may have dimension greater than 2.  In this case, the fibrewise 2nd cohomology, $H^{0,0;2}_{d_S}(X)$, is a vector bundle with rank possibly greater than 1; thus there is no maximum principle for its pluriharmonic sections.  In particular, we cannot say, even in the compact case, that if a smooth symplectic family of high rank is generalized complex then it is a symplectic fibre bundle (in disanalogy with Theorem \ref{V pluriharmonic}).  In fact, we provide a counterexample (Example \ref{counterexample})---a compact smooth symplectic family over a complex manifold which comes from a generalized complex structure but whose symplectic form is not flat in cohomology.

However, we can recover the existence of symplectic trivializations if we impose some topological conditions, of which we give a few examples.

\begin{prop}\label{omega flat on trivial bundle}
Let $\pi:X \to B$ be a smooth symplectic family over a compact connected complex manifold, with fibrewise symplectic structure $\omega$.  Furthermore, suppose $H^{0,0;2}_{d_S}(X)$ has a flat trivialization over $B$.

If these data are generalized complex, then they determine a symplectic fibre bundle.
\end{prop}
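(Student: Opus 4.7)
\bigskip

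\noindent\textbf{Proof proposal.} The plan is to reduce the bundle-valued pluriharmonicity result of Theorem \ref{[omega] pluriharmonic} to a maximum-principle argument on scalar pluriharmonic functions, using the flat trivialization of $H^{0,0;2}_{d_S}(X)$, and then to invoke Lemma \ref{symplectic bundle}.

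First, since $(P,I)$ is assumed generalized complex, Theorem \ref{[omega] pluriharmonic} applies and yields
\[
\del\bar\del[\omega]_{d_S}=0
\]
as a section of the fibrewise cohomology bundle $H^{0,0;2}_{d_S}(X)\to B$, where $\nabla=\del+\bar\del$ is the Gauss--Manin connection (as in Section \ref{the GM connection}). The point is that $[\omega]_{d_S}$ is a pluriharmonic section of this bundle with respect to the canonical flat connection $\nabla$.

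Next, I would use the hypothesis that $H^{0,0;2}_{d_S}(X)$ admits a flat global trivialization. Fix such a trivialization: it identifies the bundle with $B\times V$ for some finite-dimensional real vector space $V$, in such a way that $\nabla$ becomes the trivial connection $d\otimes\Id_V$. Under this identification, the section $[\omega]_{d_S}$ becomes a smooth map $f:B\to V$, and the condition $\del\bar\del[\omega]_{d_S}=0$ becomes $\del\bar\del f=0$ componentwise. In other words, after choosing any basis of $V$, each component of $f$ is an ordinary real-valued pluriharmonic function on $B$.

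Now I would invoke the maximum principle: since $B$ is compact and connected, every real pluriharmonic function on $B$ is constant. Hence $f$ is constant, so $[\omega]_{d_S}$ is flat under the Gauss--Manin connection, i.e.\ $\nabla[\omega]_{d_S}=0$. Finally, Lemma \ref{symplectic bundle} applies and produces symplectic local trivializations of $\pi:X\to B$, so the data determine a symplectic fibre bundle as claimed.

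The only delicate point is step two, namely that a flat trivialization of the Gauss--Manin bundle genuinely converts the equation $\del\bar\del[\omega]_{d_S}=0$ into the scalar equation $\del\bar\del f=0$; this is where one uses that $\nabla=\del+\bar\del$ splits compatibly with the holomorphic/antiholomorphic decomposition on $B$, so that killing $\nabla$ in the trivialization also kills the obstruction to commuting $\del\bar\del$ past the identification. Once this is in place, the argument is essentially just the maximum principle plus Lemma \ref{symplectic bundle}.
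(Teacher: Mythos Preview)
Your proposal is correct and follows essentially the same argument as the paper: invoke Theorem \ref{[omega] pluriharmonic} to get pluriharmonicity of $[\omega]_{d_S}$, use the flat trivialization to write $[\omega]_{d_S}=\sum f_i\sigma_i$ with flat $\sigma_i$ so that each $f_i$ is a scalar pluriharmonic function, apply the maximum principle on the compact connected base to conclude the $f_i$ are constant, and then appeal to Lemma \ref{symplectic bundle}. Your closing remark about the compatibility of the $\del,\bar\del$ decomposition with the flat trivialization is a fair point to flag, but it is immediate since the $\sigma_i$ are $\nabla$-flat and hence annihilated by both $\del$ and $\bar\del$.
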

\begin{proof}
If these data are generalized complex, then by Theorem \ref{[omega] pluriharmonic} $[\omega]_{d_S}$ is pluriharmonic.  In the flat trivialization of $H^{0,0;2}_{d_S}(X)$, $[\omega]_{d_S} = f_1\sigma_1 + \ldots + f_k\sigma_k$, for flat basis sections $\sigma_i$ and functions $f_i$.   In this notation, the pluriharmonicity condition is just that each $f_i$ is pluriharmonic.  Since $B$ is compact, by the maximum principle this means that each $f_i$ is locally constant, and thus $[\omega]_{d_S}$ is flat.  The result follows from Lemma \ref{symplectic bundle}.
\end{proof}

We summarize the situation for two particular cases where the hypotheses of Propositions \ref{omega flat on trivial bundle} hold:
\begin{thm}\label{symplectic}
Let $\pi: X \to B$ be a smooth symplectic family over a compact complex manifold, which is generalized complex.  If $B$ is simply connected, or if $\pi: X \to B$ is a trivial bundle, then in fact $X$ is a symplectic fibre bundle over $B$.
\end{thm}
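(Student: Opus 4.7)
The plan is to reduce both cases to Proposition \ref{omega flat on trivial bundle}, by showing that under either hypothesis the cohomology bundle $H^{0,0;2}_{d_S}(X)$ admits a flat trivialization with respect to the Gauss--Manin connection.

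First, recall that the Gauss--Manin connection $\nabla$ on the vector bundle $H^{0,0;2}_{d_S}(X) \to B$ (whose fibres are $H^2_{dR}(\pi^{-1}(x))$, by Proposition \ref{d_S fibre decomposition}) is flat, as noted in Section \ref{the GM connection}. A flat vector bundle over any base is determined up to isomorphism by its holonomy representation $\pi_1(B) \to GL(H^2_{dR}(F))$. Hence, if $B$ is simply connected, the holonomy is trivial and parallel transport produces a global flat frame; this is the desired flat trivialization, so Proposition \ref{omega flat on trivial bundle} applies.

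For the case when $\pi : X \to B$ is a trivial smooth fibre bundle, fix a trivialization $X \iso B \times F$. Choose the complementary distribution $N \subset TX$ to be tangent to the factors $B \times \{q\}$ for $q \in F$. Then $N$ is \emph{integrable}, so in the decomposition $d = \nabla + \Theta + d_S$ of Lemma \ref{d decomposition} the curvature term $\Theta$ vanishes and $\nabla^2 = 0$ literally, not just up to $d_S$-homotopy. Any basis of $H^2_{dR}(F)$ represented by closed forms $\sigma_1,\ldots,\sigma_k$ on $F$ pulls back via the projection $B \times F \to F$ to $\nabla$-closed (in fact $d$-closed) fibrewise forms on $X$, giving globally defined flat sections of $H^{0,0;2}_{d_S}(X)$ which trivialize it. Hence Proposition \ref{omega flat on trivial bundle} again applies and yields a symplectic fibre bundle structure.

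The only conceptual point to be careful about is that the Gauss--Manin connection is canonical (independent of the choice of $N$), so the flat trivialization constructed in the trivial-bundle case, using the preferred integrable $N$, is genuinely a flat trivialization for the intrinsic Gauss--Manin connection and not some artifact of the choice. This is guaranteed by the discussion in Section \ref{the GM connection}, where the connection $\nabla$ of Lemma \ref{d decomposition} is shown to descend to the canonical Gauss--Manin connection on $d_S$-cohomology. No other step requires serious work, and I expect no real obstacle: the theorem is essentially a corollary of Proposition \ref{omega flat on trivial bundle} combined with these two standard flatness observations.
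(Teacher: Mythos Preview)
Your proposal is correct and follows essentially the same approach as the paper: reduce to Proposition \ref{omega flat on trivial bundle} by exhibiting a flat trivialization of $H^{0,0;2}_{d_S}(X)$, using trivial holonomy of the Gauss--Manin connection in the simply connected case and the product structure in the trivial-bundle case. The paper's proof is a terse one-liner asserting exactly these two trivializations; your version simply fills in the standard details.
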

\begin{proof}
If $B$ is simply connected, then the Gauss-Manin connection trivializes $H^{0,0;2}_{d_S}(X)$, or if $\pi:X \to B$ is trivial, this induces a trivialization of $H^{0,0;2}_{d_S}(X)$.  In either case, the hypotheses of Proposition \ref{omega flat on trivial bundle} are satsified.
\end{proof}

\subsection{Generalized Calabi-Yau manifolds}\label{generalized Calabi-Yau manifolds}
A \emph{generalized Calabi-Yau} manifold (originally described in \cite{Hitchin2003}) is a generalized complex manifold whose canonical line bundle is generated by a global $d_H$-closed spinor.

Let $\pi:X \to B$ be a generalized complex smooth symplectic family over a complex manifold.  If $B$ is Calabi-Yau---that is, if its canonical bundle has a closed generating section $\rho_B$---then the spinor
$$\rho = e^{i\omega}\^\rho_B$$
on $X$ is $d_H$-closed for some closed 3-form $H$ and generates the canonical bundle $\kappa$ for the generalized complex structure.  Thus $X$ is generalized Calabi-Yau.  We note that Example \ref{counterexample} below is generalized Calabi-Yau in this way.

\begin{example}\label{counterexample}
In the higher-rank case, in contrast with surface bundles, the fact that a compact, connected smooth symplectic family is generalized complex does not imply that it will be a symplectic fibre bundle.  We give as a counterexample a certain generalized complex structure on a $T^4$-bundle over $T^2$.  ($T^k$ is the real $k$-dimensional torus.)

Consider the flat trivial bundle
$$X = T^4 \times \C \to \C.$$
Let $\theta_1,\theta_2,\theta_3,\theta_4$ be the standard basis of 1-forms for $T^4$, and let $x+iy$ be the complex coordinate on the base.  Let
$$\omega = \theta_1\^\theta_2 \,+\, \theta_3\^\theta_4 \,+\, x\,\theta_1\^\theta_3.$$
Let $N \subset TX$ be the horizontal distribution, giving a decomposition $d = d_S + \del + \bar\del$ and an extension of $\omega$ to $X$.  Then $d_S\omega = 0$ and $\del\bar\del\omega=0$, but $\nabla\omega \neq 0$---indeed, $\nabla[\omega]_{d_S} \neq 0$.

Let $\Lambda = \Z + i\Z \subset \C$ be the standard integral lattice.  We will define a monodromy homomorphism $\lambda : \Lambda \to \Aut(T^4)$ as follows: in the imaginary direction, $\lambda(i) = \Id$, and in the real direction, $\lambda(1)$ is the automorphism of $T^4$ which takes $\theta_2$ to $\theta_2 - \theta_3$ and leaves the others fixed.  Then
$$\lambda(1)^* : \omega \mapsto \theta_1\^\theta_2 + \theta_3\^\theta_4 + (x-1)\,\theta_1\^\theta_3.$$

Thus, at any $m+in \in \Lambda \subset \C$,
\begin{eqnarray*}
\left(\lambda(m+in)^*\omega\right)(m+in) &=& \theta_1\^\theta_2 + \theta_3\^\theta_4 + (m-m)\,\theta_1\^\theta_3 \\
&=& \omega(0)
\end{eqnarray*}
Then $\omega$ passes to $\tilde\omega$ on the flat bundle $\tilde{X} = X / \Lambda$.  It is still the case that $d_S\tilde\omega = 0$ and $\del\bar\del\tilde\omega=0$, so with the choices $\alpha=0$ and $\beta=0$ in Proposition \ref{H iff alpha and beta}, we see that these data come from a generalized complex structure.  But $[\tilde\omega]_{d_S}$ is still not flat, so $(\tilde{X},\tilde\omega)$ is not a symplectic fibre bundle.
\end{example}

\section{Higher complex type and the full obstruction}\label{the full obstruction}

So far, we have only studied condition (A) from Proposition \ref{H iff alpha and beta}.  For smooth symplectic families over Riemann surfaces, this condition, rephrased as a pluriharmonicity condition in cohomology, was both necessary and sufficient for the existence of a compatible generalized complex structure.  However, if the number of complex dimensions is 2 or more then conditions (B) and (C) may be nontrivial.

The technique we use (in Section \ref{the calculation}) is to try to solve (A), (B) and (C) in sequence.  Though we will be concrete, what we are doing in fact is working our way through a spectral sequence, with differential $d_S$ on the first page, $\nabla$ on the second and $\Theta$ (roughly speaking) on the third.  Figure ~\ref{fig:differentials} is suggestive---we can see that the targets of $d_S$, $\nabla$ and $\Theta$ form a diagonal, as would be expected in a spectral sequence coming from a bigrading.

We might hope that, as a sufficient condition, if the smooth symplectic family is in fact a symplectic fibre bundle then it is generalized complex.  This is not the case.  (See Example \ref{bad symplectic bundle} for a counterexample.)  However, a symplectic fibre bundle \emph{does} solve (A) and (B), and the remaining condition (C) can be understood as a cohomological obstruction on the base.

\subsection{The spectral sequence}\label{spectral sequence}

As we remarked earlier,
$$\nabla : [\sigma]_{d_S} \mapsto [\nabla\sigma]_{d_S}$$
determines a differential in $d_S$-cohomology.  Thus we may take the $\nabla$-cohomology of $H^\bullet(\Omega_T;d_S)$ to give $H^\bullet(\Omega_T;d_S,\nabla)$.

$H^\bullet(\Omega_T;d_S,\nabla)$ itself carries a differential, as follows: if $\mu$ is a form representing a class in $H^\bullet(\Omega_T;d_S,\nabla)$, then $d_S\mu=0$ and $\nabla[\mu]_{d_S} = 0$, i.e.,
$$\nabla\mu = d_S\nu$$
for some $\nu$.  One can check using the commutation relations that $\Theta\mu+\nabla\nu$ also determines a class in $H^\bullet(\Omega_T;d_S,\nabla)$.  We let
$$\Theta : [\mu]_{d_S,\nabla} \to [\Theta\mu + \nabla\nu]_{d_S,\nabla}.$$
Of course, without changing the class in $H^\bullet(\Omega_T;d_S,\nabla)$, we may replace $\mu$ with $\mu + \mu'$ and $\nu$ with $\nu + \nu' + \nu''$, where
$$\nabla\mu' = d_s\nu' \quad\textnormal{and}\quad d_s\nu''=0.$$
Then one can check that, nonetheless, $\Theta[\mu]_{d_S,\nabla} = \Theta[\mu']_{d_S,\nabla}$.  Furthermore, $\Theta^2[\mu]_{d_S,\nabla}=0$. Thus $\Theta$ defines a differential complex and we may pass from $H^\bullet(\Omega_T;d_S,\nabla)$ to $H^\bullet(\Omega_T;d_S,\nabla,\Theta)$.

We consider these differentials over the \emph{truncated complex} $\Omega^\bullet_T$ (as discussed in Section \ref{truncated complex}).  The first four pages of the spectral sequence are
$$\Omega^\bullet_T \to H^\bullet(\Omega_T;d_S) \to H^\bullet(\Omega_T;d_S,\nabla) \to H^\bullet(\Omega_T;d_S,\nabla,\Theta)$$

\subsection{The calculation}\label{the calculation}

We now attempt to solve in turn the conditions (A), (B) and (C) from Proposition \ref{H iff alpha and beta}.  Recall that, given the data of a Poisson structure with leafwise symplectic form $\omega$ and a transverse complex structure, we needed a real $(1,1;1)$-form $\alpha$ and a real $(2,1;0)+(1,2;0)$-form $\beta$ such that
$$\begin{array}{rccll}
\Phi^A(\omega) :=& i(\del\bar\del-\bar\del\del)\omega &=& d_S\alpha \quad& (A) \\
\Phi^B(\omega):=& i\left(\nabla(\theta_- - \theta_+) + \Theta(\bar\del - \del)\right)\omega &=& \nabla\alpha + d_S\beta \quad& (B) \\
\Phi^C(\omega):=& i\Theta(\theta_- - \theta_+)\omega &=& \Theta\alpha + \nabla\beta \quad& (C) 
\end{array}$$

\subsubsection*{Step A}
Let $[\Phi(\omega)]^A:=[\Phi^A(\omega)]_{d_S} \in H^{1,1;2}(\Omega;d_S)$.  Suppose that $[\Phi(\omega)]^A=0$.  Then we can solve (A), that is, there is an $\alpha$ such that 
$$i(\del\bar\del + \bar\del\del)\omega = d_S\alpha.$$
$\alpha$ is not fixed by $\omega$---we can replace it with any $\alpha + \alpha'$, where $d_S\alpha'=0$.

\subsubsection*{Step B}
Suppose that (A) is solvable.  Now from (B) we consider the term
$$\Phi^B(\omega) - \nabla\alpha = i\left(\nabla(\theta_- - \theta_+) + \Theta(\bar\del - \del)\right)\omega - \nabla\alpha.$$
It is clear that the corresponding class in $H^{3;1}(\Omega_T;d_S,\nabla)$ does not depend on the hoice of $\alpha$ solving (A).  We call this class $[\Phi(\omega)]^B$.

If $[\Phi(\omega)]^B=0$, then there is some $[\alpha'] \in H^{1,1;1}(\Omega;d_S)$ such that
$$[\Phi^B(\omega)-\nabla\alpha]_{d_S}=\nabla[\alpha']_{d_S},$$
i.e.,
$$\Phi^B(\omega) - \nabla\alpha =
\nabla\alpha' + d_S\beta$$
for some $(2,1;0)+(1,2;0)$-form $\beta$.  Then $\alpha+\alpha'$ and $\beta$ solve conditions (A) and (B).

\subsubsection*{Step C}
Suppose that (A) and (B) are solvable.  Now from $(C)$ we consider the term
$$\Phi^C(\omega) - \Theta\alpha - \nabla\beta \;=\; i\Theta(\theta_- - \theta_+)\omega - \Theta\alpha - \nabla\beta.$$
One can check that the corresponding class in the triple cohomology $H^{4;0}(\Omega_T;d_S,\nabla,\Theta)$ does not depend on the choice of $\alpha$ and $\beta$ solving (A) and (B).  We call this class $[\Phi(\omega)]^C$.

If $[\Phi(\omega)]^C=0$, then there is some $[\alpha'] \in H^{1,1;1}(\Omega_T;d_S,\nabla)$ such that
$$[\Phi^C(\omega) - \Theta\alpha - \nabla\beta]_{d_S,\nabla} = \Theta[\alpha']_{d_S,\nabla}.$$
If $\nabla\alpha' = d_S\beta'$, then this just means that
$$\left[\Phi^C(\omega) - \Theta\alpha - \nabla\beta\right]_{d_S,\nabla} = [\Theta\alpha' + \nabla\beta']_{d_S,\nabla},$$
i.e.,
$$\left[\Phi^C(\omega) - \Theta\alpha - \nabla\beta\right]_{d_S} = [\Theta\alpha' + \nabla\beta']_{d_S} + \nabla[\beta'']_{d_S}$$
for some $\beta''$.  Since this equation is basic (i.e., of degree $(4;0)$), and the basic, $d_S$-exact forms are trivial, we can drop the $d_S$-cohomology, and we have
$$\Phi^C(\omega) - \Theta\alpha - \nabla\beta = \Theta\alpha' + \nabla\beta' + \nabla\beta''.$$
Thus $\alpha + \alpha'$ and $\beta + \beta' + \beta''$ solve (C), and we can check that they still solve (A) and (B).

Conversely, if $\alpha$ and $\beta$ solve (A), (B) and (C), it is trivial that the relevant cohomology classes vanish.  Therefore,

\begin{thm}\label{cohomology condition}
Suppose that $P$ and $I$ are a regular Poisson structure and a transverse complex structure respectively, with leafwise symplectic form $\omega$.  Let $\Phi^A(\omega)$, $\Phi^B(\omega)$ and $\Phi^C(\omega)$ be the obstruction forms as in Definition \ref{obstructions}, and let
\begin{align*}
[\Phi(\omega)]^A &\in H^{2;2}(\Omega;d_S) \\
[\Phi(\omega)]^B &\in H^{3;1}(\Omega_T;d_S,\nabla) \\
[\Phi(\omega)]^C &\in H^{4;0}(\Omega_T;d_S,\nabla,\Theta)
\end{align*}
be as defined above.

Then $(P,I)$ are generalized complex if and only if $[\Phi(\omega)]^A$, $[\Phi(\omega)]^B$ and $[\Phi(\omega)]^C$ all vanish.
\end{thm}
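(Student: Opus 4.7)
The plan is to establish the theorem by following the sequential solvability argument of Steps A, B, C already outlined, and to verify the three main points that each step requires: (i) the obstruction forms $\Phi^A(\omega)$, $\Phi^B(\omega) - \nabla\alpha$, $\Phi^C(\omega) - \Theta\alpha - \nabla\beta$ are closed with respect to the relevant differentials (so they define cohomology classes on the appropriate pages of the spectral sequence); (ii) each class is independent of the auxiliary choices of $\alpha$ and $\beta$; (iii) vanishing of the class is equivalent to solvability of the corresponding equation (A), (B), or (C) given solutions to the previous ones. The overall structure is then a direct appeal to Proposition \ref{H iff alpha and beta}.

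First, I would verify that $\Phi^A(\omega) = i(\del\bar\del - \bar\del\del)\omega$ is $d_S$-closed; this follows by applying $d_S$ and using the anticommutation relations \eqref{AC1} and \eqref{AC2} together with $d_S\omega=0$. This makes $[\Phi(\omega)]^A \in H^{2;2}(\Omega;d_S)$ well-defined, and equation (A) is solvable for $\alpha \in \Omega^{1,1;1}$ exactly when this class vanishes. For Step B, given a solution $\alpha$, I would compute $d_S(\Phi^B(\omega) - \nabla\alpha)$ and $\nabla\Phi^A(\omega)$ using the quadratic relations of Remark \ref{quadratic relations}; the relations \eqref{AC1}, \eqref{AC2}, \eqref{AC4} are precisely what force $\Phi^B(\omega) - \nabla\alpha$ to be $d_S$-closed and $\nabla[\Phi^B(\omega)-\nabla\alpha]_{d_S} = 0$. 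Independence from the choice of $\alpha$ (well-defined up to a $d_S$-closed form $\alpha'$) uses $\nabla\alpha' = \nabla\alpha' + d_S\cdot 0$, so that $[\nabla\alpha']_{d_S,\nabla}=0$. Then equation (B) is solvable compatibly with (A) iff $[\Phi(\omega)]^B \in H^{3;1}(\Omega_T;d_S,\nabla)$ vanishes.

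For Step C, assuming $\alpha,\beta$ solve (A) and (B), I would verify $d_S(\Phi^C(\omega) - \Theta\alpha - \nabla\beta) = 0$ and $\nabla[\Phi^C(\omega)-\Theta\alpha-\nabla\beta]_{d_S} = 0$ using the relations \eqref{AC3} and \eqref{AC4}. Independence of the class $[\Phi(\omega)]^C \in H^{4;0}(\Omega_T;d_S,\nabla,\Theta)$ from the choices of $\alpha$ and $\beta$ requires checking that modifying $\alpha \mapsto \alpha + \alpha'$ (with $d_S\alpha' = 0$ and $\nabla\alpha' = d_S\beta'$ to keep (B) solvable) changes $\Phi^C(\omega) - \Theta\alpha - \nabla\beta$ by $-\Theta\alpha' - \nabla\beta'$, which is precisely the prescription for $\Theta$ at the third page of the spectral sequence and hence exact in $H^{\bullet}(\Omega_T;d_S,\nabla,\Theta)$. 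The small subtlety noted in the sketch---that the class lives in degree $(4;0)$, where $d_S$-exact forms vanish (since basic forms have no $S^*$-components)---lets us pass from the $\Theta$-cohomology statement back to the concrete equation $\Phi^C(\omega) = \Theta(\alpha+\alpha') + \nabla(\beta+\beta'+\beta'')$ for some $\beta''$.

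The main obstacle I anticipate is the bookkeeping for the well-definedness of $[\Phi(\omega)]^B$ and especially $[\Phi(\omega)]^C$ under the many admissible modifications of $\alpha$ and $\beta$: one must track how each modification propagates through the quadratic relations, and confirm that the output lies in the image of the next differential. This is essentially the standard verification that the constructed differentials on successive pages of a spectral sequence are well-defined, but here it must be done by hand because the grading mixes the holomorphic/anti-holomorphic degrees with the leafwise degree via $\theta_{\pm}$ and $\theta_0$. Once these verifications are made, the converse direction---that if all three classes vanish then $\alpha,\beta$ can be chosen solving (A), (B), (C) simultaneously---is exactly the iterative construction described in Steps A, B, C, and the forward direction is immediate: a solution $(\alpha,\beta)$ exhibits representatives of each obstruction as a coboundary.
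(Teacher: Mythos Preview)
Your proposal is correct and follows essentially the same approach as the paper: the sequential solvability of (A), (B), (C) via the pages of the spectral sequence, with the basicness observation in degree $(4;0)$ to pass from cohomology back to forms. The paper's own argument is in fact terser than yours---it leaves the closedness and well-definedness verifications as ``one can check''---so your explicit invocation of the quadratic relations \eqref{AC1}--\eqref{AC4} simply fills in what the paper omits.
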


\begin{rem}
Strictly speaking, $[\Phi(\omega)]^B$ is only well-defined if $[\Phi(\omega)]^A=0$, and $[\Phi(\omega)]^C$ is only well-defined if $[\Phi(\omega)]^B=0$.
\end{rem}

\begin{prop}\label{A and B if symplectic}
Let $X \to B$ be a symplectic fibre bundle over a complex base, with leafwise symplectic form $\omega$.  Then $[\Phi(\omega)]^A=0$ and $[\Phi(\omega)]^B=0$.
\end{prop}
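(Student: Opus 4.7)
My plan is to choose a symplectic connection $N$ for the fibre bundle $\pi:X\to B$, and then reduce both obstructions to explicit curvature expressions that I will show vanish cohomologically.  The existence of such a connection---one for which the extended form $\omega$ (with $\ker\omega=N$) satisfies $\nabla\omega=0$, equivalently $\del\omega=\bar\del\omega=0$---will come from averaging local symplectic connections, associated to local symplectic trivializations, via a partition of unity on $B$; the defining condition $\nabla\omega=0$ is affine in the horizontal distribution, so convex combinations preserve it.

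With this choice, the first claim is immediate, since $\Phi^A(\omega)=i(\del\bar\del-\bar\del\del)\omega=0$ already at the chain level, giving $[\Phi(\omega)]^A=0$ with $\alpha=0$.  For $\Phi^B(\omega)$, the term $i\Theta(\bar\del-\del)\omega$ drops out, and the remaining $i\nabla(\theta_--\theta_+)\omega$ can be simplified using the identity $\nabla(\Theta\omega)=-\Theta(\nabla\omega)=0$, a consequence of $\{\nabla,\Theta\}=0$.  Decomposing this identity by complex bidegree yields $\del(\theta_+\omega)=\bar\del(\theta_-\omega)=0$, $\bar\del(\theta_+\omega)=-\del(\theta_0\omega)$, and $\del(\theta_-\omega)=-\bar\del(\theta_0\omega)$, and substituting gives the clean formula $\Phi^B(\omega)=i(\del-\bar\del)(\theta_0\omega)$, where $\theta_0\omega\in\Omega^{1,1;1}$ is $d_S$-closed.

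To prove $[\Phi(\omega)]^B=0$, I must then exhibit real forms $\alpha'\in\Omega^{1,1;1}$ and $\beta\in\Omega^{2,1;0}+\Omega^{1,2;0}$ with $\nabla\alpha'+d_S\beta=\Phi^B(\omega)$.  The model case is a Hamiltonian connection, in which $\theta_\pm\omega=d_S H_\pm$ holds globally for some $H_+\in\Omega^{2,0;0}$ and $H_-=\overline{H_+}\in\Omega^{0,2;0}$.  In that case the choices $\alpha'=0$ and $\beta=i\bar\del H_+-i\del H_-$ (which is real, since $\overline{\bar\del H_+}=\del H_-$, and of the correct bidegree) will satisfy $d_S\beta=-i\bar\del(\theta_+\omega)+i\del(\theta_-\omega)=i\del(\theta_0\omega)-i\bar\del(\theta_0\omega)=\Phi^B(\omega)$ by direct computation using the relations from the previous step.

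The main obstacle will be the general case, where $\Theta$ is only locally Hamiltonian: global primitives $H_\pm$ need not exist because the flux of the connection, a class in $\Omega^2(B)\otimes H^1(F)$, can be nonzero.  To handle this I plan to absorb the flux into a nonzero $\alpha'$, exploiting that $[\omega]_{d_S}$ is Gauss--Manin flat for a symplectic fibre bundle (parallel transport preserves $\omega$ exactly), together with the cohomological identities $\del[\theta_0\omega]_{d_S}=-\bar\del[\theta_+\omega]_{d_S}$ and its conjugate, which are consequences of $\nabla[\Theta\omega]_{d_S}=0$.  Arranging a \emph{real} representative $\alpha'$, rather than just a complex one, is the subtlest point, and is the step that I expect to require the most care.
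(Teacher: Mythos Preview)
Your route differs from the paper's, and the part you flag as ``the main obstacle'' is a genuine gap.

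The paper does \emph{not} pass to a symplectic connection.  It keeps an arbitrary horizontal $N$ and uses only the cohomological consequence of symplectic trivializability, $\nabla[\omega]_{d_S}=0$, to write $\bar\del\omega=-d_S\gamma$ and $\del\omega=-d_S\bar\gamma$ for some $\gamma\in\Omega^{0,1;1}$.  It then checks directly, using relation~\eqref{AC1} and $\nabla^2+d_S\Theta+\Theta d_S=0$, that the single pair
\[
\alpha \;=\; i\nabla(\gamma-\bar\gamma)+i(\theta_--\theta_+)\omega,
\qquad
\beta \;=\; i\Theta(\gamma-\bar\gamma)
\]
solves (A) and (B).  There is no Hamiltonian / non-Hamiltonian dichotomy: the primitive $\gamma$ is exactly what carries the flux information through the argument.

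Your symplectic connection is legitimate and makes $[\Phi(\omega)]^A=0$ immediate; your identity $\Phi^B(\omega)=i(\del-\bar\del)(\theta_0\omega)$ is correct, and the Hamiltonian case is clean.  But the general case is only a sketch, and the sketch does not close.  Two points:
\begin{itemize}
\item A symplectic connection is not ``locally Hamiltonian'' in a sense you can patch over $B$.  The class $[\theta_+\omega]_{d_S}$ is a \emph{section} of $\Omega^{2,0}(B)\otimes H^1(F)$; if it is nonzero at a point it is nonzero nearby, so local primitives $H_\pm$ with $\theta_\pm\omega=d_SH_\pm$ need not exist even over small open sets of $B$.
\item Solving (B) in $d_S$-cohomology with a real $\alpha'\in\Omega^{1,1;1}$ forces, by bidegree, $\del[\alpha']=i\del[\theta_0\omega]$ together with its conjugate.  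The obvious candidate $i[\theta_0\omega]$ is purely imaginary, and the identities you cite ($\del[\theta_0\omega]=-\bar\del[\theta_+\omega]$, etc.) do not by themselves produce a real $(1,1;1)$ class with the required $\del$- and $\bar\del$-derivatives.  The Gauss--Manin flatness of $[\omega]_{d_S}$ is already fully spent in arranging $\nabla\omega=0$.
\end{itemize}
In short, by choosing $N$ so that $\gamma=0$ you have discarded precisely the primitive that the paper uses to absorb the flux when solving (B).  If you want to keep a symplectic connection, the quickest fix is to imitate the paper's formula in that special case: take $\alpha=i(\theta_--\theta_+)\omega$ and $\beta=0$ and verify (A) and (B) directly, rather than trying to force $\alpha\in\Omega^{1,1;1}$ and manufacture a $\beta$ from nonexistent Hamiltonians.
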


\begin{proof}
Since $X \to B$ has symplectic trivializations, $\nabla[\omega]_{d_S}=0$, thus there exists $\gamma \in \Omega^{0,1;1}(X)$ such that $\nabla\omega = -d_S(\gamma + \bar\gamma)$.  In particular, $\bar\del\omega=-d_S\gamma$ and $\del\omega=-d_S\bar\gamma$.  Then
\begin{align*}
\Phi^A(\omega) &= i(\del\bar\del - \bar\del\del)\omega \\
&= i(\del\bar\del + \bar\del^2 - d_S\theta_- - \bar\del\del - \del^2 + d_S\theta_+)\omega
\qquad\textnormal{(by relation \eqref{AC1})} \\
&= i\nabla(\bar\del\omega - \del\omega) + id_S(\theta_+ - \theta_-)\omega \\
&= id_S\left(\nabla(\gamma-\bar\gamma)+(\theta_- - \theta_+)\omega\right) 
\end{align*}
So $\alpha = i\nabla(\gamma-\bar\gamma)+i(\theta_- - \theta_+)\omega$ solves (A).

Given this choice of $\alpha$, we apply the relation $\nabla^2 = -d_S\Theta - \Theta d_S$ as follows:
\begin{align*}
\nabla\alpha &= i\nabla^2(\gamma-\bar\gamma) + i\nabla(\theta_- - \theta_+)\omega \\
&= -id_S\Theta(\gamma-\bar\gamma) - i\Theta d_S(\gamma-\bar\gamma) + i\nabla(\theta_- - \theta_+)\omega \\
&= -id_S\Theta(\gamma-\bar\gamma) + i\Theta (\bar\del - \del\bar)\omega + i\nabla(\theta_- - \theta_+)\omega
\end{align*}
If we let $\beta = i\Theta(\gamma-\bar\gamma)$ then
\begin{align*}
\Phi^B(\omega) &= i\nabla(\theta_- - \theta_+)\omega + i\Theta(\bar\del-\del)\omega \\
&= \d_S\beta + \nabla\alpha
\end{align*}
and thus (B) is also solved.
\end{proof}

\begin{rem}\label{C is basic}
So in a natural class of examples, two of the three conditions are satisfied; and condition (C) is not so bad after all: the form $\Phi^C(\omega) - \Theta\alpha - \nabla\beta$ which determines the obstruction is $d_S$-closed and of type $(4;0)$---that is, it is \emph{basic}.  Thus, in the case of a smooth symplectic family $X\to B$, the obstruction is in the cohomology of the base.  During the calculation, we get the term
$$[\Phi^C(\omega)-\Theta\alpha-\nabla\beta]_{d_S,\nabla} \in H^4\left(\Omega_T(B);d\right).$$
If $B$ is compact then this cohomology is finite-dimensional.  Then the final step---taking the $\Theta$-cohomology---is a finite-dimensional calculation.  (See Example \ref{bad symplectic bundle}.)
\end{rem}

So in general, for a complex manifold $B$, if $H^4(\Omega_T(B);B)=0$ then any symplectic fibre bundle over $B$ will be generalized complex.

Furthermore, if $B$ is K\"ahler, we can move from the truncated de Rham cohomology to the usual de Rham cohomology.  To be precise, a class in $H^k(\Omega_T(B);d)$ determines a class in $H^k(\Omega(B);d)$, and we can say
\begin{lem}\label{kahler not truncated}
If $B$ is K\"ahler, or satisfies the $\del\bar\del$-lemma, then the map
$$H^k(\Omega_T(B);d) \to H^k(\Omega(B);d)$$
is injective for $k>2$.
\end{lem}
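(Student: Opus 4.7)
The plan is to show by an explicit construction that any $\sigma \in \Omega_T^k(B)$ which is $d$-closed and becomes $d$-exact in the ambient complex $\Omega^\bullet(B)$ already admits a primitive inside $\Omega_T^{k-1}(B)$. Begin by writing $\sigma = d\tau$ for some $\tau \in \Omega^{k-1}(B)$ and splitting $\tau = \tau_T + \mu + \bar\mu$, where $\tau_T \in \Omega_T^{k-1}(B)$ and $\mu \in \Omega^{k-1,0}(B)$ is the ``pure-type'' piece (so $\bar\mu \in \Omega^{0,k-1}(B)$ makes $\mu+\bar\mu$ real). For $k \geq 1$ this decomposition is canonical, since $\Omega^{k-1}(B) = \Omega_T^{k-1}(B) \dsum (\Omega^{k-1,0}(B) \dsum \Omega^{0,k-1}(B))_\R$.

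Next I would extract the consequences of $\sigma$ being truncated. The only potentially non-truncated components of $d\tau$ live in bidegrees $(k,0)$ and $(0,k)$, namely $\del\mu$ and its conjugate $\bar\del\bar\mu$; since $\sigma \in \Omega_T^k(B)$, both must vanish, giving $\del\mu = 0$. Then $\bar\del\mu$ is $\bar\del$-exact by construction, is $\bar\del$-closed trivially, and is $\del$-closed because $\del\bar\del\mu = -\bar\del\del\mu = 0$. The $\del\bar\del$-lemma (valid under the K\"ahler hypothesis on $B$) therefore yields $\eta \in \Omega^{k-2,0}(B)$ with $\bar\del\mu = \del\bar\del\eta$.

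Finally, set $\nu := \bar\del\eta + \overline{\bar\del\eta}$, a real form with components in $\Omega^{k-2,1}(B)$ and $\Omega^{1,k-2}(B)$. The hypothesis $k > 2$ forces $k-2 \geq 1$, so each component has at least one degree in both $N^*_{1,0}$ and $N^*_{0,1}$, placing $\nu$ inside $\Omega_T^{k-1}(B)$. A direct computation then gives
\begin{equation*}
d\nu = \del\bar\del\eta + \overline{\del\bar\del\eta} = \bar\del\mu + \del\bar\mu = d(\mu+\bar\mu),
\end{equation*}
so $\sigma = d(\tau_T + \nu)$ with $\tau_T + \nu \in \Omega_T^{k-1}(B)$, establishing injectivity. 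The only non-routine step is checking the hypotheses of the $\del\bar\del$-lemma for $\bar\del\mu$; this is the expected obstacle, and it rests solely on $\del\mu = 0$. The reason the result fails at $k=2$ is precisely that $\eta$ would then have bidegree $(0,0)$ and $\bar\del\eta \in \Omega^{0,1}(B)$ would not lie in $\Omega_T^1(B)$.
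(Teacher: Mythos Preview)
Your proof is correct and follows essentially the same approach as the paper's. Both arguments take a primitive $\tau$ (the paper's $\beta$), observe that its $(k-1,0)$-component $\mu$ (the paper's $\beta^{k-1,0}$) satisfies $\del\mu=0$ because the truncated form has no $(k,0)$-part, apply the $\del\bar\del$-lemma to $\bar\del\mu$ to produce $\eta$ (the paper's $\gamma$) with $\bar\del\mu=\del\bar\del\eta$, and then replace $\mu$ by $\bar\del\eta$ to obtain a primitive lying in $\Omega_T^{k-1}$; your treatment of the conjugate side simultaneously via $\nu=\bar\del\eta+\overline{\bar\del\eta}$ is slightly tidier than the paper's ``similarly for $(0,k-1)$,'' and your verification that $\bar\del\mu$ is $\del$-closed (directly from $\del\mu=0$) is more direct than the paper's route through $\del\alpha^{k-1,1}=0$, but these are cosmetic differences.
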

This is a well-known type of result.  For example, the author learned the technique of proof from \cite{Goto}.
\begin{proof}
Suppose that $[\alpha]_T \in H^k(\Omega_T(B);d)$, i.e., $\alpha$ is a real $d$-closed $k$-form with no components of degree $(k,0)$ or $(0,k)$.  Let $[\alpha]$ be the corresponding class in the usual $H^k(\Omega(B);d)$, and suppose that $[\alpha]=0$, i.e., there is a $k-1$-form $\beta$ such that $d\beta=\alpha$.

Consider a degree of $d\alpha=0$:
\begin{align*}
(d\alpha)^{k,1} &= \del\alpha^{k-1,1} + \bar\del\alpha^{k,0} \\
0 &= \del\alpha^{k-1,1} + 0
\end{align*}
And a degree of $\alpha=d\beta$:
\begin{align*}
\alpha^{k-1,1} &= \del\beta^{k-2,1} + \bar\del\beta^{k-1,0} \\
\alpha^{k-1,1} - \del\beta^{k-2,1} &= \bar\del\beta^{k-1,0}
\end{align*}
So $\alpha^{k-1,1} - \del\beta^{k-2,1}$ is $\bar\del$-exact and $\del$-closed.  By the $\del\bar\del$-lemma, this implies that for some $\gamma \in \Omega^{k-2,0}(B)$,
$$\bar\del\beta^{k-1,0} = \del\bar\del\gamma.$$
Let $\beta' = \beta - \beta^{k-1,0} + \bar\del\gamma$.  Then $\beta'$ has no $(k-1,0)$ component, and
\begin{align*}
d\beta' &= d\beta - \del\beta^{k-1,0} - \bar\del\beta^{k-1,0} + \del\bar\del\gamma \\
&= d\beta - 0 - \del\bar\del\gamma + \del\bar\del\gamma \\
&= \alpha
\end{align*}
So $\alpha$ has a primitive with no component in degree $(k-1,0)$.  Similarly, we may remove the component in $(0,k-1)$, and thus $\alpha$ has a primitive in the truncated complex $\Omega_T^\bullet(B)$.  I.e., since $[\alpha]=0$, therefore $[\alpha]_T=0$.
\end{proof}

\begin{cor}
If $B$ is K\"ahler and $H^4(B)=0$, then any symplectic fibre bundle $X\to B$ is generalized complex.
\end{cor}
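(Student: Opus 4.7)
The plan is to combine the results already assembled in this section: Proposition \ref{A and B if symplectic} handles conditions (A) and (B) for free, and the K\"ahler hypothesis together with Lemma \ref{kahler not truncated} will eliminate (C). By Theorem \ref{cohomology condition}, the data $(P,I)$ coming from a symplectic fibre bundle are generalized complex if and only if $[\Phi(\omega)]^A$, $[\Phi(\omega)]^B$, and $[\Phi(\omega)]^C$ all vanish, so the entire argument reduces to showing that the last of these is zero.

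First I would invoke Proposition \ref{A and B if symplectic} to obtain the explicit primitives $\alpha$ and $\beta$ solving conditions (A) and (B). With these in hand, I would turn to Remark \ref{C is basic}: the residual form $\Phi^C(\omega)-\Theta\alpha-\nabla\beta$ is $d_S$-closed and of pure base-degree $(4;0)$, so it is basic and its $[\,\cdot\,]_{d_S,\nabla}$-class lives naturally in $H^4(\Omega_T(B);d)$. Passing through one more level of the spectral sequence of Section \ref{spectral sequence}, we see that the vanishing of this class in $H^4(\Omega_T(B);d)$ already implies the vanishing of $[\Phi(\omega)]^C$ in $H^{4;0}(\Omega_T; d_S,\nabla,\Theta)$.

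Next I would apply Lemma \ref{kahler not truncated}: since $B$ is K\"ahler, the map $H^4(\Omega_T(B);d)\to H^4(\Omega(B);d)$ is injective. The assumption $H^4(B)=0$ then forces $H^4(\Omega_T(B);d)=0$, so the residual obstruction has no choice but to vanish. Chaining the implications, $[\Phi(\omega)]^C=0$, completing the verification of the hypotheses of Theorem \ref{cohomology condition}.

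I do not expect a serious obstacle here, since both heavy lifts---the explicit construction of $\alpha,\beta$ for symplectic fibre bundles and the injectivity from the truncated to the full de Rham cohomology---are already in place. The one point that requires a moment's care is the descent claim of Remark \ref{C is basic}, namely that a choice of $\alpha,\beta$ solving (A) and (B) really does produce a well-defined class in $H^4(\Omega_T(B);d)$ whose vanishing is sufficient; this is essentially bookkeeping within the spectral sequence and uses no new ingredient beyond what has been set up in Section \ref{the calculation}.
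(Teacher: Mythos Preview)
Your proposal is correct and follows essentially the same route as the paper: use Proposition \ref{A and B if symplectic} to kill conditions (A) and (B), then invoke Remark \ref{C is basic} to place the remaining obstruction in $H^4(\Omega_T(B);d)$, and finally apply Lemma \ref{kahler not truncated} together with $H^4(B)=0$ to force that group to vanish. The paper in fact states the corollary with no separate proof, relying on exactly the chain of implications you spell out.
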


The above result is not very useful in low dimensions.  If, for example, we are considering a symplectic fibre bundle over a K\"ahler manifold, whose total space has real dimension 6, then the nontrivial case is when the fibres are symplectic surfaces and the base is a complex K\"ahler surface.  But then $H^4(B)$ just measures the volume of a volume form on the base, and is nontrivial.  To determine whether the generalized complex structure is obstructed, we must actually calculate $[\Phi(\omega)]^C$ by integrating $\Phi^C(\omega)$ over the base, as in the following:

\begin{example}\label{bad symplectic bundle}
We give an example of the failure of condition (C), a compact symplectic fibre bundle which is not generalized complex.
\begin{rem}
The version of Example \ref{bad symplectic bundle} in the original published version of this paper contained an error, making it fail as an example of this phenomenon.  This was pointed out to us by Gil Cavalcanti, who earlier proved \cite[Theorem 2.3]{Cavalcanti} that a symplectic surface bundle over a generalized complex base whose fibre is not a torus is indeed generalized complex.  The error in the argument for our original example (constructed as the projectivization of the tangent bundle of $\C P^2$) was an invalid inference from known facts about the curvature of the Fubini-Study metric to conclusions about the relevant $\theta_\pm$.  We provide a new and correct example of the phenomenon.  We would like to thank Marco Gualtieri for discussions which greatly clarified this example.
\end{rem}

Let $B$ be a compact complex Kahler surface for which there exist $a,b \in H^{2,0 + 0,2}(B,\Z)$ such that $a \^ b \neq 0$.  For example, $B$ could be $T^4 = \C^2 / \Z^4$, with $a = b = [dz_1\^dz_2 + d\bar{z}_1 \^ d\bar{z}_2]$.  Then $a$ and $b$ are the Chern classes of $S^1$ bundles $X_a \to B$ and $X_b \to B$ respectively, whose fibres come with standard invariant $1$-forms $dt_a$ and $dt_b$ respectively.  The symplectic bundle we use as our example will be
$$X := X_a \x_B X_b \to B,$$
with symplectic form $\omega = dt_a \^ dt_b$.

We may choose $S^1$-equivariant connections on each of $X_a$ and $X_b$, giving a $T^2$-equivariant connection on $X \to B$.  Then, by construction, $\theta_+$---the $(2,0)$ part of the curvature---contracts with $dt_a$ to give the class $a^{2,0}$ and with $dt_b$ to give the class $b^{2,0}$ (and similarly for $\theta_-$ and $a^{0,2}$ and $b^{0,2}$).  Note that, since the Chern class has no $(1,1)$ part, $\theta_0, dt_a$ and $\theta_0, dt_b$ integrate to $0 \in H^2(B)$.

As per Proposition \ref{A and B if symplectic}, conditions (A) and (B) are satisfied for some $\alpha$ and $\beta$.  In fact, since $\omega$ is flat under the given connection, i.e., $\del\omega=\bar\del\omega=0$, $\alpha=0$ and $\beta=0$ solve (A) and (B).

Given $\alpha=0$ and $\beta=0$, the remaining obstruction is
$$\Phi^C(\omega) = i\Theta(\theta_- - \theta_+)\omega$$
As per Remark \ref{C is basic} and Lemma \ref{kahler not truncated}, $H^{4;0}(\Omega_T;d_S,\nabla)$ is naturally contained in $H^4(B)$, and so we compute
\begin{align*}
[i(\theta_+\theta_- - \theta_-\theta_+)\omega]_{d_S,\nabla} &= [2i\theta_+dt_a \^ \theta_-dt_b - 2i\theta_+dt_b \^ \theta_-dt_a]_{d_S,\nabla} \\
&= 2i\, a^{2,0} \^ b^{0,2} - 2i\, b^{2,0} \^ a^{0,2} \;\in H^4(B)  \\
&= 2i\, a \^ b \neq 0.
\end{align*}

Of course, before we can know if the generalized complex structure is obstructed, we must pass to $H^{4;0}(\Omega_T;d_S,\nabla,\Theta)$.  Does there exist a $[c] \in H^{2;1}(\Omega_T;d_S,\nabla)$ such that $\Theta [c] = 2i\, a \^ b$?  Since we are dealing with the \emph{truncated} complex, $[c]$ must be of type $(1,1;1)$.  By dimension count, $\theta_+[c]=0$ and $\theta_-[c]=0$.  But, as we mentioned above, $\theta_0$ vanishes in cohomology, so the answer is no.  Then $[\Phi(C)]_{d_S,\nabla,\Theta}$ does not vanish and there exists no compatible generalized complex structure.
\end{example}

\end{document}